\newtheoremstyle{case}{}{}{}{}{}{:}{ }{}
\theoremstyle{case}
\newcommand{\be}{\begin{equation}}
\newcommand{\ee}{\end{equation}}
\newcommand{\ben}{\begin{eqnarray*}}
\newcommand{\een}{\end{eqnarray*}}
\newtheorem{examp}{\sc Example}
\newtheorem{remk}{\sc Remark}
\newtheorem{corol}{\sc Corollary}
\newtheorem{lemma}{\sc Lemma}
\newtheorem{theorem}{\sc Theorem}
\newtheorem{defn}{\sc Definition}
\newtheorem{prop}{\sc Proposition}
\newcommand{\bet}{\begin{theorem}}
    \newcommand{\eet}{\end{theorem}}
\newcommand{\bel}{\begin{lemma}}
    \newcommand{\eel}{\end{lemma}}
\newcommand{\bed}{\begin{defn}}
    \newcommand{\eed}{\end{defn}}
\newcommand{\brem}{\begin{remk}}
    \newcommand{\erem}{\end{remk}}
\newcommand{\bex}{\begin{examp}}
    \newcommand{\eex}{\end{examp}}
\newcommand{\bcl}{\begin{corol}}
    \newcommand{\ecl}{\end{corol}}
\newcommand{\bep}{\begin{prop}}
    \newcommand{\eep}{\end{prop}}
\newcommand{\NI}{\noindent}
\newcommand{\bea}{\begin{eqnarray}}
\newcommand{\eea}{\end{eqnarray}}
\newcommand{\vsp}{\vskip 1em}
\begin{document}
\title{\large {\bf{\sc Tracing homotopy path for the solution of nonlinear complementarity  Problem}}}

\author{ A. Dutta$^{a, 1}$, A. K. Das$^{b, 2}$\\
\emph{\small $^{a}$Department of Mathematics, Jadavpur University, Kolkata, 700 032, India}\\	
\emph{\small $^{b}$SQC \& OR Unit, Indian Statistical Institute, Kolkata, 700 108, India}\\
\emph{\small $^{1}$Email: aritradutta001@gmail.com}\\
\emph{\small $^{2}$Email: akdas@isical.ac.in} \\
 }
\date{}

\maketitle

\begin{abstract}

	 \NI In this article, we consider nonlinear complementarity problem. We introduce a new homotopy function for finding the solution of nonlinear complementarity problem through the trajectory . We show that the homotopy path approaching the solution is smooth and bounded .  Numerical example of an oligopoly equilibrium problem is illustrated to show the effectiveness of the proposed algorithm. \\
\NI{\bf Keywords:}  Nonlinear complementarity problem, homotopy continuation method, Oligopoly equilibrium problem. 
\end{abstract}
\section{Introduction}
The nonlinear complementarity problem is identified as an important mathematical programming problem. The idea of nonlinear complementarity problem is based on the concept of linear complementarity problem. For recent study on this problem and applications see \cite{das2017finiteness}, \cite{articlee14}, \cite{bookk1}, \cite{articlee7} and references therein. For details of several matrix classes in complementarity theory, see \cite{articlee1}, \cite{articlee2}, \cite{articlee9}, \cite{articlee17}, \cite{article1}, \cite{mohan2001more}, \cite{article12}, \cite{article07}, \cite{dutta2022column} and references cited therein. The problem of computing the value vector and optimal stationary strategies for structured stochastic games for discounted and undiscounded zero-sum games and quadratic Multi-objective programming problem are formulated as linear complementary problems. For details see \cite{articlee18}, \cite{mondal2016discounted}, \cite{neogy2005linear} and \cite{neogy2008mixture}. The complementarity problems are considered with respect to principal pivot transforms and pivotal method to its solution point of view. For details see \cite{articlee8}, \cite{articlee10}, \cite{das1} and \cite{neogy2012generalized}. 

There so many methods are developed to solve a nonlinear complementarity problem. see \cite{pang1986}, \cite{pang1993},\cite{karamardian1969}, \cite{watson1979}.  Eaves and Saigal \cite{eaves1972homotopies} formed an important class of globally convergent methods for solving systems of non-linear equations. Such methods have been used to constructively prove the existence of solutions to many economic and engineering problems. The fundamental idea of a homotopy continuation method is to solve a problem by tracing a certain continuous path that leads to a solution of the problem. Thus, defining a homotopy mapping that yields a finite continuation path plays an essential role in a homotopy continuation method.

\vsp
The paper is organized as follows. Section 2 presents some basic notations and results. In section 3, we propose a new homotopy function to find the solution of nonlinear complementarity problem. We construct a smooth and bounded homotopy path  to find the solution of the nonlinear complementarity problem as the homotopy parameter $\lambda$ tends to $0$. To find the solution of homotopy function we use modified homotopy continuation method to increase the order of convergency of the algorithm.  We also find the sign of the positive tangent direction of the homotopy path. Finally, in section 4, we  numerically solve the oligopoly problem which is formulated by nonlinear complementarity problem using the introduced homotopy function. 

\section{Preliminaries}
Consider a function $f: R^n \rightarrow R^n$ , and a vector 
$z\in R^n$ such that $f= \left[\begin{array}{c}
f_1\\
f_2\\
\vdots\\
f_n\\
\end{array}\right]$ and $z= \left[\begin{array}{c}
z_1\\
z_2\\
\vdots\\
z_n\\
\end{array}\right].$ 
The complementarity problem is to find a vector $z\in R^n$ such that
\begin{equation}\label{cp}
z^Tf(z)=0, \ \ \  f(z)\geq 0 , \ \ \   z\geq 0.
\end{equation}
When the function $f$ is a nonlinear function, then it is called nonlinear complementarity problem.
\vsp
The basic idea of homotopy method is to construct a homotopy continuation path 
from the auxiliary mapping g to the object mapping f. 
Suppose the given problem is to find a root of the non-linear equation f(x) = 0
and suppose g(x) = 0 is an auxiliary equation with $g(x_0)=0$. Then the
homotopy function $H:R^{n+1} \to R^n$ can be
 defined as $H(x, \lambda) = $
 $ (1-\lambda)f(x) + \lambda g(x),$
 $ 0 \leq \lambda \leq 1.$
Then we consider the homotopy equation $H(x, \lambda) = 0,$ where $(x_0,1)$ is a known solution of the homotopy equation. Our aim is to find the solution of the equation $f(x)=0$ from the known solution of $g(x) = 0$ by solving the homotopy equation $H(x, \lambda) = 0$ varrying the values of $\lambda$  from $1$ to $0$. Kojima et al showed that under some conditions nonlinear complementarity problem can be solvable by homotopy continuation method. For details see \cite{kojima1991homotopy}, \cite{kojima1993general}, \cite{kojima1994global}, \cite{tseng1997infeasible}.
\vsp

Now we state some results which will be required in the next section.
\begin{lemma}
	(Generalizations of Sard's Theorem\cite{Chow}) \ Let $U \subset R^n$ be an open set and $f :R^n \to R^p$ be smooth. We say $y \in R^p$ is a regular value for $f$ if $\text{Range} Df(x) = R^p $ $\forall x \in f^{-1}(y),$ where $Df(x)$ denotes the $n \times p$ matrix of partial derivatives of $f(x).$
\end{lemma}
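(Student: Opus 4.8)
The statement as quoted only records the \emph{definition} of a regular value; I read the conclusion intended under the name ``generalizations of Sard's theorem'' as the pair of classical assertions that the subsequent homotopy construction will need: (i) if $f$ is smooth then the set of critical values $f(\{x : \operatorname{Range} Df(x) \neq R^p\})$ has Lebesgue measure zero in $R^p$, so almost every $y \in R^p$ is a regular value of $f$; and (ii) the parametric form, namely that if $H : U \times V \to R^p$ is smooth with $U \seq R^n$, $V \seq R^m$ open and $y$ a regular value of $H$, then $y$ is a regular value of the slice $H(\cdot,a)$ for almost every $a \in V$. The plan is to take (i) as the engine and derive (ii) from it.

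For (i) I would not reprove Sard's theorem from scratch — it is classical and \cite{Chow} is invoked precisely to import it — but would only sketch the skeleton. Cover $U$ by countably many cubes, so it suffices to control the image of the critical set $C$ in one cube. Filter $C = C_1 \supseteq C_2 \supseteq \cdots$, where $C_j$ consists of the points at which every partial derivative of $f$ of order $\leq j$ vanishes. Then show separately that $f(C \sm C_1)$, each $f(C_j \sm C_{j+1})$, and $f(C_k)$ for $k$ large are null: the first two by using the implicit function theorem to straighten a nonvanishing derivative and then combining Fubini with the inductive hypothesis in one lower domain dimension, and the last by a direct Taylor-remainder estimate showing $f$ compresses $C_k$ into a set coverable by small cubes whose total volume tends to $0$. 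A countable union of null sets being null, $f(C)$ is null, so its complement — the regular values — has full measure.

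For (ii): since $y$ is a regular value of $H$, the preimage theorem makes $M := H^{-1}(y)$ a smooth manifold of dimension $n+m-p$; consider the smooth map $\pi : M \to V$, $\pi(x,a)=a$, obtained by restricting the coordinate projection, and apply (i) to $\pi$, so that its critical values form a null subset of $V$. The one genuinely new step is the elementary identity that $a$ is a \emph{regular} value of $\pi|_M$ exactly when $y$ is a regular value of $H_a := H(\cdot,a)$. Unwinding the definitions: for $(x,a)\in M$ one has $T_{(x,a)}M = \ker DH(x,a)$, and writing $DH(x,a) = [\,D_x H \mid D_a H\,]$ one checks that $\operatorname{Range}(D_a H)\seq \operatorname{Range}(D_x H)$ is equivalent to $D\pi|_M$ being onto $R^m$ at $(x,a)$, which together with the surjectivity of $DH(x,a)$ forces $\operatorname{Range}(D_x H) = R^p$. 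Hence $a$ is a critical value of $\pi|_M$ iff $H_a$ has a critical point on $H_a^{-1}(y)$, so off a null set of $a$ every such $x$ is a regular point of $H_a$, i.e. $y$ is a regular value of $H_a$, as claimed.

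\textbf{Main obstacle.} All the analytic weight sits inside (i) — the covering and Taylor estimates for $f(C_k)$ and the Fubini/induction bookkeeping for the remaining pieces (the smoothness hypothesis here is $C^\infty$, so the usual order count $k \geq \max(1,n-p+1)$ is automatic) — which is exactly why one cites \cite{Chow} rather than redoing it. The only part demanding care in our setting is the block-matrix equivalence between regular values of $\pi|_M$ and of $H_a$, which is routine linear algebra but must be stated precisely because the construction of the smooth, bounded homotopy path in Section~3 rests on it.
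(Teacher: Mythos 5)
The paper offers no proof of this lemma at all: it is imported from \cite{Chow} as a black box, and --- as you correctly notice --- the ``statement'' as printed is not even a theorem but merely the \emph{definition} of a regular value. So there is no argument in the paper to compare yours against; the honest reading is that the authors intend to cite Sard's theorem and its parameterized variant and use them, not to prove them.

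Your reconstruction is mathematically sound. Two remarks. First, the parametric assertion you derive as part (ii) is not hidden inside this lemma; it is stated separately as the paper's Lemma 2.2 (Parameterized Sard Theorem, cited from \cite{Wang}), so in effect you have proved two of the four preliminary lemmas rather than one. Second, the one step in (ii) that genuinely needs checking --- the equivalence between $a$ being a regular value of $\pi|_M$ and $y$ being a regular value of $H_a$ --- is handled correctly: with $T_{(x,a)}M=\ker DH(x,a)$ and $DH=[\,D_xH\mid D_aH\,]$, the restricted projection $(\xi,\eta)\mapsto\eta$ is onto $R^m$ at $(x,a)$ iff for every $\eta$ the equation $D_xH\,\xi=-D_aH\,\eta$ is solvable, i.e.\ iff $\operatorname{Range}(D_aH)\subseteq\operatorname{Range}(D_xH)$; since $\operatorname{Range}(D_xH)+\operatorname{Range}(D_aH)=R^p$ by regularity of $y$ for $H$, this is equivalent to $\operatorname{Range}(D_xH)=R^p$, i.e.\ to $x$ being a regular point of $H_a$, and the vacuous cases (empty fiber) match on both sides. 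Your decision to leave the interior of Sard's theorem as a sketch and cite \cite{Chow} is exactly what the paper does, so there is no gap --- only the caveat that you have filled in content the paper states but never argues.
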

\begin{lemma}\label{par}
	(Parameterized Sard Theorem \cite{Wang}) \ Let $V \subset R^n, U \subset R^m$ be open sets, and let $\phi:V\times U \to R^k$ be a $C^\alpha$ mapping, where $\alpha >\text{max}\{0,m-k\}.$ If $0\in R^k$ is a regular value of $\phi,$ then for almost all $a \in V, 0$ is a regular value of $\phi _ a=\phi(a,.).$    
\end{lemma}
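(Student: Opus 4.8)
\noindent\emph{Proof sketch.} The plan is to realize the zero set of $\phi$ as a manifold and then read the conclusion off from the ordinary Sard theorem (Lemma~2.1) applied to a projection. First I would invoke the regular value (preimage) theorem: since $0$ is a regular value of the $C^\alpha$ map $\phi:V\times U\to R^k$ and $\alpha\geq 1$, the set $M:=\phi^{-1}(0)$ is a $C^\alpha$ submanifold of $V\times U$ of dimension $n+m-k$ (with the convention that $M=\emptyset$ is permitted), and its tangent space at a point $(a,u)\in M$ is $T_{(a,u)}M=\ker D\phi(a,u)$.

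Next I would introduce the natural projection $\pi:M\to V$, $\pi(a,u)=a$, a $C^\alpha$ map between manifolds of dimensions $n+m-k$ and $n$. The difference of dimensions is $(n+m-k)-n=m-k$, so the hypothesis $\alpha>\max\{0,m-k\}$ is precisely what is required to apply Sard's theorem to $\pi$: the set of critical values of $\pi$ has Lebesgue measure zero in $V$. Equivalently, for almost all $a\in V$ the point $a$ is a regular value of $\pi$.

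The crux is then the linear-algebra step: if $a\in V$ is a regular value of $\pi$, then $0$ is a regular value of $\phi_a=\phi(a,\cdot)$. If $\phi_a^{-1}(0)=\emptyset$ this holds vacuously, so fix $u$ with $\phi(a,u)=0$, i.e. $(a,u)\in M$. Writing $D\phi(a,u)(b,w)=D_a\phi(a,u)\,b+D_u\phi(a,u)\,w$ with $D_u\phi(a,u)=D\phi_a(u)$, take an arbitrary $c\in R^k$. Surjectivity of $D\phi(a,u)$ gives $(b,w)$ with $D_a\phi(a,u)\,b+D_u\phi(a,u)\,w=c$, while surjectivity of $D\pi(a,u):T_{(a,u)}M\to R^n$ gives $(b,\tilde w)\in\ker D\phi(a,u)$ with the same first component $b$, so that $D_a\phi(a,u)\,b=-D_u\phi(a,u)\,\tilde w$. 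Substituting yields $c=D_u\phi(a,u)(w-\tilde w)=D\phi_a(u)(w-\tilde w)$, hence $D\phi_a(u)$ is onto. Combining this with the previous paragraph shows that $0$ is a regular value of $\phi_a$ for almost all $a\in V$, which is the assertion.

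I expect the main obstacle to be purely careful bookkeeping rather than any deep analysis: one must check that the smoothness class $C^\alpha$ survives the preimage theorem (which needs $\alpha\geq1$, guaranteed since $\alpha>\max\{0,m-k\}\geq0$), that the dimension count $\dim M-\dim V=m-k$ matches the stated hypothesis on $\alpha$ so that the quoted Sard theorem is applicable, and that the identification $T_{(a,u)}M=\ker D\phi(a,u)$ is used correctly in the surjectivity chase. Beyond the classical Sard theorem already stated, nothing harder is involved.
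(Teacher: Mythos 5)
The paper does not contain a proof of this lemma: it is stated as a known result with a citation to \cite{Wang}, so there is no ``paper's own proof'' to compare against. Your argument is the standard proof of the parameterized Sard (transversality) theorem and it is correct: the preimage theorem makes $M=\phi^{-1}(0)$ a $C^\alpha$ manifold of dimension $n+m-k$ (since $\alpha\geq 1$), Sard's theorem applied to the projection $\pi:M\to V$ works precisely because $\dim M-\dim V=m-k$ matches the hypothesis $\alpha>\max\{0,m-k\}$, and the surjectivity chase correctly transfers regularity of $a$ for $\pi$ to regularity of $0$ for $\phi_a$, with the empty-preimage case handled vacuously. One small bookkeeping point: despite its label, the paper's Lemma~2.1 is only a \emph{definition} of regular value, not a statement of Sard's theorem, so you cannot literally ``read the conclusion off from Lemma~2.1''; you need to invoke the genuine classical $C^r$ Sard theorem as an external result.
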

\begin{lemma}\label{inv}
	(The inverse image theorem \cite{Wang}) \ Let $\phi : U \subset R^n \to R^p$ be $C^\alpha$ mapping, where $\alpha >\text{max}\{0,n-p\}.$ Then $\phi^{-1}(0)$ consists of some $(n-p)$-dimensional $C^\alpha$ manifolds. 
\end{lemma}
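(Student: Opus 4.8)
The plan is to prove this local-to-global statement in the standard way: around each point of $\phi^{-1}(0)$ I will exhibit an explicit $C^\alpha$ coordinate chart, obtained from the implicit function theorem. As the surrounding development (in particular Lemma \ref{par}) makes clear, the statement is to be read with the hypothesis that $0$ is a regular value of $\phi$, i.e. $D\phi(x): R^n \to R^p$ is surjective (equivalently has rank $p$) for every $x \in \phi^{-1}(0)$; without it a constant map is already a counterexample. Observe also that $\alpha > \max\{0,n-p\}$ forces $\alpha \ge 1$, so $D\phi$ exists, and that working over the open set $U$ avoids boundary complications.

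First I would fix $x_0 \in \phi^{-1}(0)$. Surjectivity of $D\phi(x_0)$ means it has $p$ linearly independent columns, so after relabelling coordinates we may split $R^n = R^{n-p} \times R^p$, writing $x = (y,z)$ with $x_0 = (y_0,z_0)$, in such a way that the $p \times p$ block of $D\phi(x_0)$ corresponding to the $z$-variables is nonsingular. The implicit function theorem, applied in class $C^\alpha$, then yields open neighbourhoods $y_0 \in Y_0 \subseteq R^{n-p}$ and $x_0 \in W_0 \subseteq U$ and a $C^\alpha$ map $g : Y_0 \to R^p$ with $g(y_0) = z_0$ such that, within $W_0$, $\phi(y,z)=0$ holds exactly when $z = g(y)$. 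Hence $\phi^{-1}(0) \cap W_0$ is the graph $\{(y,g(y)) : y \in Y_0\}$; the parametrization $\psi(y) = (y,g(y))$ is a $C^\alpha$ bijection of $Y_0$ onto it, and its inverse is the restriction of the coordinate projection $(y,z) \mapsto y$, hence $C^\alpha$ as well. This $\psi^{-1}$ serves as a chart at $x_0$.

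Letting $x_0$ range over $\phi^{-1}(0)$, these charts cover the set, each models its piece on an open subset of $R^{n-p}$, and on overlaps the transition maps are compositions of coordinate projections with the smooth graph maps $\psi$, hence $C^\alpha$. Therefore $\phi^{-1}(0)$ is an $(n-p)$-dimensional $C^\alpha$ manifold --- empty if $0 \notin \phi(U)$, and possibly disconnected, which is what the phrase ``consists of some manifolds'' records. The only step needing genuine care is the reduction from ``rank $D\phi(x_0) = p$'' to ``some $p \times p$ submatrix of $D\phi(x_0)$ is nonsingular,'' together with checking that the hypothesis $\alpha > \max\{0,n-p\}$ is exactly what makes the implicit function theorem return a $C^\alpha$ --- and not merely continuous --- solution $g$; everything else is routine bookkeeping.
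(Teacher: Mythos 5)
Your proof is correct, and in fact the paper offers nothing to compare it against: Lemma \ref{inv} is quoted from the literature (\cite{Wang}) without proof, as a known form of the regular value / preimage theorem. Your argument is the standard one --- add the (implicitly assumed) hypothesis that $0$ is a regular value, select a nonsingular $p \times p$ block of $D\phi(x_0)$, invoke the $C^\alpha$ implicit function theorem to write the zero set locally as a graph, and check that the graph parametrizations give $C^\alpha$ charts with $C^\alpha$ transition maps --- and you were right to flag that without the regular-value hypothesis the statement is false. One small correction: the hypothesis $\alpha > \max\{0,n-p\}$ is not what makes the implicit function theorem return a $C^\alpha$ solution; any $\alpha \ge 1$ suffices for that. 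The stronger differentiability requirement is inherited from the parameterized Sard theorem (Lemma \ref{par}), where it is genuinely needed to ensure that regular values are generic; in the preimage theorem itself it plays no role beyond guaranteeing $\alpha \ge 1$.
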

\begin{lemma}\label{cl}
	(Classification theorem of one-dimensional smooth manifold \cite{N}) \ One-dimensional smooth manifold is diffeomorphic to a unit circle or a unit interval.
\end{lemma}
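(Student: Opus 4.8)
\noindent\emph{Proof sketch (plan).} Since a smooth manifold is the disjoint union of its connected components, it suffices to prove that every connected one-dimensional smooth manifold $M$ is diffeomorphic to the circle $S^1$ or to an interval; the plan is to follow the classical arc-length argument (cf. \cite{N}). First I would equip $M$ with a Riemannian metric, obtained by patching the Euclidean metrics on coordinate charts with a smooth partition of unity, which is available because $M$ is second countable, hence paracompact. With respect to this metric one can speak of a \emph{parametrization by arc length}: a smooth map $\gamma : J \to M$ with $J \subseteq R$ an open interval and $\|\gamma'(t)\| \equiv 1$. Such a $\gamma$ is an immersion between $1$-manifolds, hence a local diffeomorphism onto an open subset of $M$, and every point of $M$ has a neighbourhood admitting such a parametrization (simply reparametrize any chart by arc length).

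The technical core is the overlap analysis. Given two arc-length parametrizations $\gamma : I \to M$ and $\delta : K \to M$, set $W = \{\, t \in I : \gamma(t) \in \delta(K) \,\}$, an open subset of $I$. I would show that on each connected component of $W$ the transition map $\delta^{-1} \circ \gamma$ has constant derivative $\pm 1$, hence is of the form $t \mapsto \pm t + c$; consequently, along that component, $\gamma$ and $\delta$ glue to a single arc-length parametrization of $\gamma(I) \cup \delta(K)$. The key claim is that $W$ has at most two connected components, and that if it has exactly two, then $M$ is diffeomorphic to $S^1$: in the two-component case $\gamma(I) \cup \delta(K)$ is a compact $1$-manifold without boundary on which one can explicitly build a diffeomorphism onto $S^1$ by matching the two arcs, and this union is simultaneously open and closed in the connected manifold $M$, so it is all of $M$. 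Controlling the number of overlap components and recognizing the circle is where the global topology enters, and this is the step I expect to be the main obstacle.

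Granting the overlap lemma, I would finish with a maximality argument: order the arc-length parametrizations of $M$ by extension and apply Zorn's lemma to obtain a maximal one, $\gamma : I \to M$. Its image is open; I claim it is also closed. If not, choose $p$ in the closure of $\gamma(I)$ with $p \notin \gamma(I)$, pick an arc-length parametrization $\delta : K \to M$ of a neighbourhood of $p$, and invoke the overlap analysis: either $W$ has two components, whence $M \cong S^1$ and we are done, or $W$ has a single component, in which case $\gamma$ and $\delta$ glue to an arc-length parametrization strictly extending $\gamma$, contradicting maximality. Hence $\gamma(I)$ is open and closed, so $\gamma(I) = M$ by connectedness, and $\gamma$ is a diffeomorphism of the interval $I$ onto $M$. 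Composing with an affine change of variable identifies $M$ with the unit interval (with the boundary case $[0,1]$ arising when $M$ is a manifold with boundary, which is the situation relevant to the homotopy path). Thus every connected $1$-manifold is diffeomorphic to $S^1$ or to the unit interval, and the general statement follows componentwise. \qed
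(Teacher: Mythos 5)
The paper does not prove this lemma at all: it is quoted as a known classification result and attributed to \cite{N}, so there is no in-paper argument to compare yours against. Your sketch reconstructs the standard textbook proof (the arc-length argument from the appendix of Milnor's \emph{Topology from the Differentiable Viewpoint}): reduce to the connected case, put a Riemannian metric on $M$ via a partition of unity, cover $M$ by arc-length parametrizations, show that two such parametrizations overlap in at most two components with affine transition maps $t\mapsto \pm t+c$, detect the circle in the two-component case, and otherwise extend a maximal parametrization by Zorn's lemma until its image is open and closed, hence all of $M$. This is the correct route and almost certainly the one the cited source takes; note also that you silently repair an imprecision in the statement itself, which should read ``\emph{connected}'' one-dimensional manifold (and, for the paper's application to the path $\Gamma_x^{(0)}$, manifold \emph{with boundary}, so that the half-open and closed intervals are admissible models).

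The one substantive reservation is that your plan openly defers the overlap lemma --- that $W=\{t\in I:\gamma(t)\in\delta(K)\}$ has at most two components, each mapped affinely, and that two components force $M\cong S^1$ --- and this is not a routine verification but the entire technical content of the theorem. The usual argument analyzes the graph of $\delta^{-1}\circ\gamma$ inside $I\times K$: it is a closed $1$-dimensional submanifold with slope $\pm1$ whose segments cannot terminate in the interior of $I\times K$, so each segment runs corner to corner, which bounds the number of segments by two and, in the two-segment case, supplies the two arcs that are glued into a circle. As a \emph{plan} your proposal is sound and complete in structure, but a referee would require this graph argument (or an equivalent) to be written out before accepting the proof; everything else in your outline is standard and correct.
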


\begin{lemma}\cite{cordero2012increasing}\label{coorder}
 Consider the function $f:R^n\to R^n$ and the iterative method
 $y^k=x^k-f'(x^k)^{-1}f(x^k),$ \ 
 $z^k=x^k-2(f'(y^k)+f'(x^k))^{-1}f(x^k), \ w^k=z^k-f'(y^k)^{-1}f(z^k)$   has $5$th order convergence.
\end{lemma}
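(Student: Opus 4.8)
\noindent\emph{Proof proposal.} The plan is to run the classical error--equation analysis about a simple zero, assuming $f$ is sufficiently smooth near the solution. Let $x^{*}$ satisfy $f(x^{*})=0$ with $f'(x^{*})$ nonsingular, put $e_{k}=x^{k}-x^{*}$, and introduce the normalized derivatives $C_{j}=\tfrac{1}{j!}\,f'(x^{*})^{-1}f^{(j)}(x^{*})$ for $j\ge 2$ (symmetric $j$--linear maps in the vector setting). First I would Taylor--expand the data about $x^{*}$ to the order actually needed,
\begin{equation*}
f(x^{k})=f'(x^{*})\big[e_{k}+C_{2}e_{k}^{2}+C_{3}e_{k}^{3}+C_{4}e_{k}^{4}+O(e_{k}^{5})\big],\qquad f'(x^{k})=f'(x^{*})\big[I+2C_{2}e_{k}+3C_{3}e_{k}^{2}+4C_{4}e_{k}^{3}+O(e_{k}^{4})\big],
\end{equation*}
invert the bracket of $f'(x^{k})$ by a Neumann expansion, and substitute into the first substep to recover the familiar Newton error $y^{k}-x^{*}=C_{2}e_{k}^{2}+O(e_{k}^{3})$.

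The middle substep is the crux. Using $y^{k}-x^{*}=C_{2}e_{k}^{2}+O(e_{k}^{3})$, I would expand $f'(y^{k})=f'(x^{*})\big[I+2C_{2}^{2}e_{k}^{2}+O(e_{k}^{3})\big]$, add $f'(x^{k})$, factor out $2f'(x^{*})$, and invert to obtain a Neumann expansion of $2\big(f'(y^{k})+f'(x^{k})\big)^{-1}$. Multiplying this by the expansion of $f(x^{k})$ and subtracting from $e_{k}$, the contributions of orders $e_{k}$ and $e_{k}^{2}$ cancel and one is left with
\begin{equation*}
z^{k}-x^{*}=\big(C_{2}^{2}+\tfrac12 C_{3}\big)e_{k}^{3}+O(e_{k}^{4}),
\end{equation*}
so the pair $(y^{k},z^{k})$ is already a third--order scheme.

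Finally I would feed $z^{k}$ into the last substep. Writing $u=z^{k}-x^{*}=O(e_{k}^{3})$, expand $f(z^{k})=f'(x^{*})\big[u+O(e_{k}^{6})\big]$ and $f'(y^{k})^{-1}f'(x^{*})=I-2C_{2}^{2}e_{k}^{2}+O(e_{k}^{3})$; the product $f'(y^{k})^{-1}f(z^{k})$ then equals $u-2C_{2}^{2}e_{k}^{2}u+O(e_{k}^{6})$, whence
\begin{equation*}
w^{k}-x^{*}=2C_{2}^{2}e_{k}^{2}u+O(e_{k}^{6})=\big(2C_{2}^{4}+C_{2}^{2}C_{3}\big)e_{k}^{5}+O(e_{k}^{6}).
\end{equation*}
Thus $\|w^{k}-x^{*}\|=O(\|e_{k}\|^{5})$, which is precisely the claimed fifth--order convergence.

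The only genuine difficulty is the bookkeeping: in $R^{n}$ the coefficients $C_{j}$ are non--commuting multilinear operators, so the order of the factors must be tracked through every product and every Neumann inversion, and the expansions must be carried to exactly the order needed so that the cancellations at orders $e_{k},\,e_{k}^{2}$ in the $z$--step and the collapse of the $e_{k}^{3},\,e_{k}^{4}$ contributions in the $w$--step become visible. Everything else is mechanical; alternatively one may first verify the scalar case $n=1$ and note that the vector computation follows the identical pattern, as carried out in \cite{cordero2012increasing}.
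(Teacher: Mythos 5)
Your error--equation analysis is correct: the expansions $y^{k}-x^{*}=C_{2}e_{k}^{2}+O(e_{k}^{3})$, $z^{k}-x^{*}=(C_{2}^{2}+\tfrac12 C_{3})e_{k}^{3}+O(e_{k}^{4})$ and $w^{k}-x^{*}=(2C_{2}^{4}+C_{2}^{2}C_{3})e_{k}^{5}+O(e_{k}^{6})$ all check out, establishing fifth order. Note that the paper itself offers no proof of this lemma --- it is quoted verbatim from the reference \cite{cordero2012increasing} --- and your Taylor/Neumann-expansion argument is essentially the standard one carried out there, so there is no divergence of approach to report.
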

   \section{Main Results} 
     Now we solve nonlinear complementarity problem by homotopy method. Consider two positive numbers $m,n,$ such that $m$ is very large positive number and $l$ is positive number, $l<<m.$ 
 First we define\\ $\mathcal{R}_{(m)}=\{(z,y,w_1,w_2,v_1,v_2)\in R_{++}^n \times R_{++}^n \times R_{++}^n \times R_{++}^n \times R_{++} \times R_{++}:m-(\overset{n}{\underset{i=1}{\sum}}(z+w_1)_i+v_2)>l, m-(\overset{n}{\underset{i=1}{\sum}}(y+w_2)_i+v_1)>l\}, \ $\\
$ \mathcal{\bar{R}}_{(m)}=\{(z,y,w_1,w_2,v_1,v_2)\in R_{+}^n \times R_{+}^n \times R_{+}^n \times R_{+}^n \times R_{+} \times R_{+}:m-(\overset{n}{\underset{i=1}{\sum}}(z+w_1)_i+v_2)\geq l, m-(\overset{n}{\underset{i=1}{\sum}}(y+w_2)_i+v_1)\geq l\}.$\\ Here $m$ is a predefined large number. We choose the initial point\\ $x^{(0)}=(z^{(0)},y^{(0)},{w_1}^{(0)},{w_2}^{(0)},{v_1}^{(0)},{v_2}^{(0)})\in \mathcal{R}_{(m)}$ such that\\ 
 $A^0 B^0-B^0{v_2}^{(0)}-{A^{(0)}{v_1}^{(0)}}=0, $\\
$l(B^{(0)}{v_2}^{(0)}-A^{(0)}{v_1}^{(0)})+lB^{(0)}(l-A^{(0)})+A^{(0)}{v_1}^{(0)}(A^{(0)}-{v_2}^{(0)}) \neq 0,\\ l(A^{(0)}{v_1}^{(0)}-B^{(0)}{v_2}^{(0)})+lA^{(0)}(l-B^{(0)})+B^{(0)}{v_2}^{(0)}(B^{(0)}-{v_1}^{(0)}) \neq 0,\\ l(B^{(0)}-l)\neq(A^{(0)}-{v_2}^{(0)}){v_1}^{(0)},  l(A^{(0)}-l)\neq(B^{(0)}-{v_1}^{(0)}){v_2}^{(0)},$\\ where $A=(m-(\overset{n}{\underset{i=1}{\sum}}(z+w_1)_i)), A^{(0)}=(m-(\overset{n}{\underset{i=1}{\sum}}(z^{(0)}+w_1^{(0)})_i)), \\
B=(m-(\overset{n}{\underset{i=1}{\sum}}(y+w_2)_i)),$ $B^{(0)}=m-(\overset{n}{\underset{i=1}{\sum}}(y^{(0)}+w_2^{(0)})_i). $ \\Now define the feasible region $\mathcal{F}_{(m)}=\{(z,y,w_1,w_2,v_1,v_2)\in {\mathcal{R}}_{(m)}: {v_1} \neq \frac{\lambda(A^{(0)}-{v_2}^{(0)}){v_1}^{(0)}}{l}; \\ {v_2} \neq \frac{\lambda(B^{(0)}-{v_1}^{(0)}){v_2}^{(0)}}{l} \ \forall  \ \lambda \in (0,1)\},$\\ $\mathcal{\bar{F}}_{(m)}=\{(z,y,w_1,w_2,v_1,v_2) \in \mathcal{\bar{R}}_{(m)}:  {v_1} \neq \frac{\lambda(A^{(0)}-{v_2}^{(0)}){v_1}^{(0)}}{l};  {v_2} \neq \frac{\lambda(B^{(0)}-{v_1}^{(0)}){v_2}^{(0)}}{l} \ \forall \ \lambda \in (0,1)\}.$ \\
$\partial \mathcal{F}_{(m)}=\{(z,y,w_1,w_2,v_1,v_2)\in \partial\mathcal{R}_{(m)}: {v_1} \neq \frac{\lambda(A^{(0)}-{v_2}^{(0)}){v_1}^{(0)}}{l};  {v_2} \neq \frac{\lambda(B^{(0)}-{v_1}^{(0)}){v_2}^{(0)}}{l} \ \forall \ \lambda \in (0,1)\},$ where $\partial\mathcal{R}_{(m)}$ is the boundary of $ \mathcal{\bar{R}}_{(m)}.$
 
 Now we construct the homotopy function 
 \begin{equation}\label{homf}
 H(x,x^{(0)},\lambda)=\left[\begin{array}{c} 
 (1-\lambda)(y-w_1+v_1e+J_f^t(z-w_2+v_2e))+\lambda(z-z^{(0)}) \\
 W_1z-\lambda W_1^{(0)}z^{(0)}\\
 W_2y-\lambda W_2^{(0)}y^{(0)}\\
 y-(1-\lambda)f(z)-\lambda(y^{(0)})\\
 (m-\overset{n}{\underset{i=1}{\sum}}(z+w_1)_i-v_2)v_1- \lambda((m-\overset{n}{\underset{i=1}{\sum}}(z^{(0)}+w_1^{(0)})_i-v_2^{(0)})v_1^{(0)})\\
 (m-\overset{n}{\underset{i=1}{\sum}}(y+w_2)_i-v_1)v_2- \lambda((m-\overset{n}{\underset{i=1}{\sum}}(y^{(0)}+w_2^{(0)})_i-v_1^{(0)})v_2^{(0)})\\
 \end{array}\right]=0
 \end{equation}
 where $e=[1,1,\cdots,1]^t,\ Z=\text{diag}(z);\ W_1=\text{diag}(w_1); \ W_2=\text{diag}(w_2); \ W_1^{(0)}=\text{diag}(w_1^{(0)}); \ W_2^{(0)}=\text{diag}(w_2^{(0)}); \ x=(z,y,w_1,w_2,v_1,v_2)\in \mathcal{\bar{F}}_{(m)}; \ x^{(0)}=(z^{(0)},y^{(0)},{w_1}^{(0)},{w_2}^{(0)},{v_1}^{(0)},{v_2}^{(0)})\in \mathcal{F}_{(m)}; \ \lambda \in (0,1]$ and $J_f$ is the jacobian of $f(z).$ 

\begin{theorem}\label{reg}
	For almost all initial points $x^{(0)}\in \mathcal{F}_{(m)},$ $0$ is a regular value of the homotopy function $H:R^{4n+2} \times (0,1] \to R^{4n+2}$ and the zero point set $H_{x^{(0)}}^{-1}(0)=\{(x,\lambda)\in \mathcal{F}_{(m)} \times (0,1]:H_{x^{(0)}}(x,\lambda)=0\}$ contains a smooth curve $\Gamma_x^{(0)}$ starting from $(x^{(0)},1).$  
\end{theorem}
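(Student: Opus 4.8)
\emph{Proof idea.} The plan is to run the standard Sard-type argument for homotopy continuation. View $H$ in \eqref{homf} as a map of the full triple, $H:\mathcal F_{(m)}\times\mathcal F_{(m)}\times(0,1]\to R^{4n+2}$, $(x,x^{(0)},\lambda)\mapsto H(x,x^{(0)},\lambda)$, assuming $f$ is smooth enough (a couple of derivatives, which we take for granted) so that the $C^{\alpha}$ hypotheses of Lemmas~\ref{par} and~\ref{inv} hold in the present dimensions. First I would record that $(x^{(0)},1)$ is a zero of $H_{x^{(0)}}$: at $\lambda=1$ the first and fourth block-rows of \eqref{homf} reduce to $z-z^{(0)}$ and $y-y^{(0)}$, while each of the remaining four has the shape $(\text{expression in }x)-1\cdot(\text{same expression in }x^{(0)})$, so all six vanish at $x=x^{(0)}$.

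The core step is to prove that $0$ is a regular value of $H$, for which I would exhibit a nonsingular $(4n+2)\times(4n+2)$ block of the Jacobian $DH$. The natural candidate is $\partial H/\partial x^{(0)}$: the six block-rows of $H$ depend on $x^{(0)}$ only through $-\lambda z^{(0)}$, $-\lambda W_1^{(0)}z^{(0)}$, $-\lambda W_2^{(0)}y^{(0)}$, $-\lambda y^{(0)}$, $-\lambda(A^{(0)}-v_2^{(0)})v_1^{(0)}$ and $-\lambda(B^{(0)}-v_1^{(0)})v_2^{(0)}$, so this block is nearly block-triangular; clearing the $z^{(0)}$- and $y^{(0)}$-columns with the first and fourth block-rows and then the $w_1^{(0)}$- and $w_2^{(0)}$-columns with the second and third leaves a single $2\times2$ determinant, and one finds that $\det(\partial H/\partial x^{(0)})$ equals, up to sign, $\lambda^{4n+2}\big(\prod_{i=1}^{n}z_i^{(0)}\big)\big(\prod_{i=1}^{n}y_i^{(0)}\big)\big[(A^{(0)}-v_2^{(0)})(B^{(0)}-v_1^{(0)})-v_1^{(0)}v_2^{(0)}\big]$. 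On $\mathcal R_{(m)}$ the factors $\lambda\in(0,1]$ and $z^{(0)},y^{(0)}\in R^{n}_{++}$ cause no trouble, while the bracket equals $A^{(0)}B^{(0)}-A^{(0)}v_1^{(0)}-B^{(0)}v_2^{(0)}$ and hence is nonzero exactly off the measure-zero surface isolated by the first equality imposed on admissible starting points. On that surface the $x^{(0)}$-block drops to rank $4n+1$; there one augments instead with the column $\partial H/\partial\lambda$ (and, if necessary, a column of $\partial H/\partial x$), and it is precisely here that the remaining inequality conditions on $x^{(0)}$ — those of the form $l(B^{(0)}-l)\neq(A^{(0)}-v_2^{(0)})v_1^{(0)}$, $l(A^{(0)}-l)\neq(B^{(0)}-v_1^{(0)})v_2^{(0)}$ and their companions — are used to certify that $DH$ keeps full row rank $4n+2$. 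This gives that $0$ is a regular value of $H$.

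Now Lemma~\ref{par} (parameterized Sard), with $x^{(0)}$ as the parameter, yields that for almost all $x^{(0)}\in\mathcal F_{(m)}$, $0$ is a regular value of $H_{x^{(0)}}:\mathcal F_{(m)}\times(0,1]\to R^{4n+2}$; Lemma~\ref{inv} then shows $H_{x^{(0)}}^{-1}(0)$ is a $1$-dimensional $C^{\alpha}$ manifold (with boundary), and Lemma~\ref{cl} classifies each of its connected components as diffeomorphic to a circle or to an interval. Since $(x^{(0)},1)\in H_{x^{(0)}}^{-1}(0)$ and $\lambda=1$ lies on the boundary of the parameter interval $(0,1]$, the point $(x^{(0)},1)$ is an endpoint of its component $\Gamma_x^{(0)}$; a short transversality check — that $DH_{x^{(0)}}(x^{(0)},1)$ is onto, which now genuinely needs the $\lambda$-direction because $\partial H_{x^{(0)}}/\partial x$ at that point is itself singular (its determinant being the same vanishing bracket) — confirms that $\Gamma_x^{(0)}$ emanates from $(x^{(0)},1)$ into $\lambda<1$. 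Hence $\Gamma_x^{(0)}$ is a smooth curve starting from $(x^{(0)},1)$, as asserted.

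I expect the genuine obstacle to be the rank analysis on the exceptional surface $(A^{(0)}-v_2^{(0)})(B^{(0)}-v_1^{(0)})=v_1^{(0)}v_2^{(0)}$: the obvious $x^{(0)}$-block degenerates there, so one must verify — invoking the admissibility inequalities on $x^{(0)}$ — that adjoining $\partial H/\partial\lambda$ restores surjectivity, both for the global regularity of $H$ and for the transversality of $\Gamma_x^{(0)}$ to $\{\lambda=1\}$ at the start. Everything else (the smoothness bookkeeping, the substitution at $(x^{(0)},1)$, and the two applications of Lemmas~\ref{par}--\ref{inv}) is routine.
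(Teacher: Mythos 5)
Your proposal follows essentially the same route as the paper: exhibit $\partial H/\partial x^{(0)}$ as a nonsingular $(4n+2)\times(4n+2)$ block of $DH$ so that $0$ is a regular value of the full map $H$, then invoke the parameterized Sard theorem (Lemma \ref{par}), the inverse image theorem (Lemma \ref{inv}) and the classification of one-dimensional manifolds to extract a smooth curve $\Gamma_x^{(0)}$ emanating from $(x^{(0)},1)$. Your determinant agrees with the paper's, namely $\pm\lambda^{4n+2}\prod_{i=1}^{n} z_i^{(0)}y_i^{(0)}\bigl[(A^{(0)}-v_2^{(0)})(B^{(0)}-v_1^{(0)})-v_1^{(0)}v_2^{(0)}\bigr]$. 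Where you genuinely go beyond the paper is in noting that the bracket equals $A^{(0)}B^{(0)}-A^{(0)}v_1^{(0)}-B^{(0)}v_2^{(0)}$, which is precisely the quantity the paper's first admissibility condition on $x^{(0)}$ forces to be \emph{zero}; the paper simply asserts the determinant is nonzero, which is incompatible with its own choice of initial point (a stray fragment after the bibliography suggests the intended hypothesis was $(A^{(0)}-v_2^{(0)})(B^{(0)}-v_1^{(0)})\neq v_1^{(0)}v_2^{(0)}$). That is a real catch. Two caveats on your own write-up: for the theorem as literally stated (``for almost all $x^{(0)}\in\mathcal{F}_{(m)}$'') you do not need the repair on the degenerate surface at all, since that surface is the zero set of a nonconstant polynomial and hence has measure zero and can simply be excised from the parameter set before applying Lemma \ref{par}; and your proposed repair there (adjoining $\partial H/\partial\lambda$ and invoking the remaining inequality conditions) is only sketched, not verified, so it should not be leaned on. Your endpoint transversality check at $\lambda=1$ is likewise an addition the paper omits. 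Net: your argument is correct for the stated theorem and matches the paper's strategy, while exposing an inconsistency between this proof and the initial-point hypotheses used later in the paper.
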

\begin{proof}
	The jacobian matrix of the above homotopy function $H(x,x^{(0)},\lambda)$ is denoted by $DH(x,x^{(0)},\lambda))$ and we have $DH(x,x^{(0)},\lambda))=$$\left[\begin{array}{ccc} 
	\frac{\partial{H(x,x^{(0)},\lambda)}}{\partial{x}} & 	\frac{\partial{H(x,x^{(0)},\lambda)}}{\partial{x^{(0)}}} & \frac{\partial{H(x,x^{(0)},\lambda)}}{\partial{\lambda}}\\ 
	\end{array}\right].$ For all $x^{(0)} \in \mathcal{F}_1$ and $\lambda \in (0,1],$ we have $\frac{\partial{H(x,x^{(0)},\lambda)}}{\partial{x^{(0)}}}=$$\begin{bmatrix}
	K_1 & K_2\\
	K_3 & K_4\\
	\end{bmatrix},$\\ where $K_1=\begin{bmatrix} -\lambda I & 0 & 0 & 0\\
    -\lambda W_1^{(0)} & 0 & -\lambda Z^{(0)} & 0\\
     0 & -\lambda W_2^{(0)} & 0 & -\lambda Y^{(0)}\\
      0 & -\lambda I & 0 & 0\\
      \end{bmatrix},$ $K_2=\begin{bmatrix}
      0 & 0\\
      0 & 0\\
      0 & 0\\
      0 & 0\\
       \end{bmatrix},$ 
       
       $K_3=\begin{bmatrix}
       \lambda v_1^{(0)} e^t & 0 & \lambda v_1^{(0)} e^t & 0 \\
       0 & \lambda v_2^{(0)} e^t  & 0 & \lambda v_2^{(0)} e^t\\
       \end{bmatrix},$ 
       
       $K_4= \begin{bmatrix}
       -\lambda(m-\overset{n}{\underset{i=1}{\sum}}(z^{(0)}+w_1^{(0)})_i-v_2^{(0)}) & \lambda v_1^{(0)}\\
       \lambda v_2^{(0)} & -\lambda(m-\overset{n}{\underset{i=1}{\sum}}(y^{(0)}+w_2^{(0)})_i-v_1^{(0)})\\
       \end{bmatrix} ,$ \\
     $Y^{(0)}=\text{diag}(y^{(0)}), Z^{(0)}=\text{diag}(z^{(0)})$, $W_1^{(0)}=\text{diag}(w_1^{(0)})$, $W_2^{(0)}=\text{diag}(w_2^{(0)})$.\\
	$\text{det}(\frac{\partial{H}}{ \partial{x^{(0)}}})=\lambda^{4n+2}((m-\overset{n}{\underset{i=1}{\sum}}(z^{(0)}+w_1^{(0)})_i-v_2^{(0)})(m-\overset{n}{\underset{i=1}{\sum}}(y^{(0)}+w_2^{(0)})_i-v_1^{(0)})-v_1^{(0)}v_2^{(0)})\prod_{i=1}^{n} z_i^{(0)}y_i^{(0)}\neq 0$ for $\lambda \in (0,1].$ \\ 
	Thus $DH(x,x^{(0)},\lambda))$ is of full row rank. Therefore, $0$ is a regular value of $H(x,x^{(0)},\lambda)).$ By Lemmas \ref{par} and \ref{inv}, for almost all  $x^{(0)} \in \mathcal{F}_{(m)},$ $0$ is a regular value of $H_{x^{(0)}}(x,\lambda)$ and $H_{x^{(0)}}^{-1}(0)$ consists of some smooth curves and $H_{x^{(0)}}(x^{(0)},1)=0.$ Hence there must be a smooth curve  $\Gamma_x^{(0)}$ starting from  $(x^{(0)},1).$
\end{proof}

\begin{theorem}\label{bdd}
	Let $\mathcal{F}_{(m)}$ be a nonempty set. For a given $x^{(0)} \in \mathcal{F}_{(m)},$ if  $0$ is a regular value of $H(x,x^{(0)},\lambda),$ then $\Gamma_x^{(0)}$ is a bounded curve in $\mathcal{\bar{F}}_{(m)} \times (0,1].$ 
\end{theorem}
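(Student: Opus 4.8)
The plan is to show that the smooth curve $\Gamma_{x^{(0)}}$ cannot escape to the boundary of $\mathcal{\bar F}_{(m)}$ nor run off to infinity, by extracting coordinate-wise bounds directly from the six block-equations of the homotopy system $H(x,x^{(0)},\lambda)=0$. First I would argue by contradiction: if $\Gamma_{x^{(0)}}$ were unbounded, then since it lies in $\mathcal{\bar F}_{(m)}\times(0,1]$ and the $\lambda$-interval is bounded, there would be a sequence of points $(x^{(k)},\lambda_k)$ on $\Gamma_{x^{(0)}}$ with $\|x^{(k)}\|\to\infty$. The key observation is that the very definition of $\mathcal{R}_{(m)}$ already forces $\sum_i(z+w_1)_i+v_2 < m-l$ and $\sum_i(y+w_2)_i+v_1<m-l$; since all these coordinates are nonnegative in $\mathcal{\bar F}_{(m)}$, this immediately bounds $z$, $w_1$, $w_2$, $v_1$, $v_2$ and (via the fourth block $y=(1-\lambda)f(z)+\lambda y^{(0)}$ together with the bound on $z$ and continuity of $f$ on the compact set) also $y$. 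So in fact $\mathcal{\bar F}_{(m)}$ is already a bounded set, and the only real content is that $\Gamma_{x^{(0)}}$ stays away from the parts of $\partial\mathcal{\bar F}_{(m)}$ where components vanish or where the excluded hyperplanes $v_1=\lambda(A^{(0)}-v_2^{(0)})v_1^{(0)}/l$, $v_2=\lambda(B^{(0)}-v_1^{(0)})v_2^{(0)}/l$ are met — i.e. that the closure of $\Gamma_{x^{(0)}}$ does not meet $\partial\mathcal{\bar F}_{(m)}\times(0,1]$ except possibly at $\lambda\to 0$.

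Next I would handle the positivity of the components. Suppose $(x^{(k)},\lambda_k)\to(\bar x,\bar\lambda)$ with $\bar\lambda\in(0,1]$ and $\bar x\in\partial\mathcal{\bar R}_{(m)}$. From the second block $W_1 z=\lambda W_1^{(0)}z^{(0)}$ we get $(w_1)_i(z)_i=\lambda (w_1^{(0)})_i(z^{(0)})_i>0$ for each $i$ (using $\lambda>0$ and $x^{(0)}\in\mathcal{F}_{(m)}\subset R_{++}$), so neither $(z)_i$ nor $(w_1)_i$ can vanish in the limit; similarly the third block $W_2 y=\lambda W_2^{(0)}y^{(0)}$ keeps $(y)_i,(w_2)_i$ bounded away from $0$. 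For $v_1,v_2$, the fifth and sixth blocks give
\be
(A-v_2)v_1=\lambda(A^{(0)}-v_2^{(0)})v_1^{(0)},\qquad (B-v_1)v_2=\lambda(B^{(0)}-v_1^{(0)})v_2^{(0)},
\ee
and the right-hand sides are nonzero for $\lambda>0$ (this is exactly the purpose of the conditions $l(B^{(0)}-l)\neq(A^{(0)}-v_2^{(0)})v_1^{(0)}$ etc.\ imposed on $x^{(0)}$, which guarantee $A^{(0)}-v_2^{(0)}\neq 0$ and $v_1^{(0)}\neq 0$), hence $v_1,v_2$ stay bounded away from $0$; and the constraints $A-v_2=m-\sum(z+w_1)_i-v_2>l-\ $ wait, more precisely $A-\sum\cdots$ stays $\ge l>0$ by definition of $\mathcal{\bar R}_{(m)}$, so the factors $A-v_2$ and $B-v_1$ cannot blow up or collapse in a way that violates the equations. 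Finally the two excluded hyperplanes are avoided for free because $\mathcal{\bar F}_{(m)}$ is defined to exclude exactly the points satisfying $v_1=\lambda(A^{(0)}-v_2^{(0)})v_1^{(0)}/l$ and $v_2=\lambda(B^{(0)}-v_1^{(0)})v_2^{(0)}/l$, and by the inverse-image theorem (Lemma \ref{inv}) $\Gamma_{x^{(0)}}$ is a relatively closed $1$-manifold inside $\mathcal{\bar F}_{(m)}\times(0,1]$, so its limit points with $\bar\lambda>0$ stay in that set.

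Putting these together: any limit point $(\bar x,\bar\lambda)$ of $\Gamma_{x^{(0)}}$ with $\bar\lambda\in(0,1]$ actually lies in the open set $\mathcal{F}_{(m)}\times(0,1]$, not on its boundary, and $\bar x$ has all coordinates in a fixed compact box determined by $m$ and $l$. Combined with Lemma \ref{cl} — the curve $\Gamma_{x^{(0)}}$, being a connected $1$-manifold with boundary point $(x^{(0)},1)$, is diffeomorphic to a half-open interval — this shows $\Gamma_{x^{(0)}}$ is contained in the bounded set $\mathcal{\bar F}_{(m)}\times(0,1]$, which is the assertion. I expect the main obstacle to be the bookkeeping in the $v_1,v_2$ estimates: one has to verify carefully that the algebraic side-conditions placed on the initial point $x^{(0)}$ are precisely what is needed to keep the products $(A-v_2)v_1$ and $(B-v_1)v_2$ from degenerating, and to rule out the scenario where $A-v_2\to 0$ while $v_1\to\infty$ (which is already excluded by $A-v_2\ge l$ giving $v_1 = \lambda(A^{(0)}-v_2^{(0)})v_1^{(0)}/(A-v_2)$ bounded). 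Everything else reduces to reading the bounds off the defining inequalities of $\mathcal{\bar R}_{(m)}$.
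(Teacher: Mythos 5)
Your core argument matches the paper's: both recognize that the defining inequalities of $\mathcal{\bar R}_{(m)}$ already bound $z,w_1,v_2$ and $y,w_2,v_1$ (hence $\mathcal{\bar F}_{(m)}\times(0,1]$ is bounded), and so $\Gamma_{x^{(0)}}$, being contained in that set, is automatically bounded. The long middle section of your proposal about positivity of components and avoidance of the excluded hyperplanes is not needed for this boundedness statement — that material belongs to (and indeed essentially reproduces cases (ii)--(iii) of) the subsequent convergence theorem, not Theorem \ref{bdd}.
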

\begin{proof}
		We have that $0$ is a regular value of $H(x,x^{(0)},\lambda)$  by theorem \ref{reg} and $\mathcal{F}_{(m)}$ be a nonempty set. It is clear that the set $\mathcal{F}_{(m)}$ and $(0,1]$ is bounded. So there exists a sequence of points $\{z^k,y^k,w_1^k,w_2^k,v_1^k,v_2^k,\lambda^k\} \subset \Gamma_x^{(0)} \times (0,1],$ such that $\lim\limits_{k \to \infty}z^k=\bar{z}, \lim\limits_{k \to \infty}y^k=\bar{y}, \lim\limits_{k \to \infty}w_1^k=\bar{w_1}, \lim\limits_{k \to \infty}w_2^k=\bar{w_2}, \lim\limits_{k \to \infty}v_1^k=\bar{v_1}, \lim\limits_{k \to \infty}v_2^k=\bar{v_2}, \lim\limits_{k \to \infty}\lambda^k=\bar{\lambda}. $ Hence $\Gamma_x^{(0)}$ is a bounded curve in $\mathcal{\bar{F}}_{(m)} \times (0,1].$

\end{proof}

Now we show the convergence of the homotopy function \ref{homf}. 
\begin{theorem}
	For $x^{(0)}=(z^{(0)},y^{(0)},w_1^{(0)},w_2^{(0)},v_1^{(0)},v_2^{(0)})\in  \mathcal{R}_{(m)}$ such that\\
	 $A^0 B^0-B^0{v_2}^{(0)}-{A^{(0)}{v_1}^{(0)}}=0,$\\
	$l(B^{(0)}{v_2}^{(0)}-A^{(0)}{v_1}^{(0)})+lB^{(0)}(l-A^{(0)})+A^{(0)}{v_1}^{(0)}(A^{(0)}-{v_2}^{(0)}) \neq 0,\\ l(A^{(0)}{v_1}^{(0)}-B^{(0)}{v_2}^{(0)})+lA^{(0)}(l-B^{(0)})+B^{(0)}{v_2}^{(0)}(B^{(0)}-{v_1}^{(0)}) \neq 0,\\ l(B^{(0)}-l)\neq(A^{(0)}-{v_2}^{(0)}){v_1}^{(0)}, \\ l(A^{(0)}-l)\neq(B^{(0)}-{v_1}^{(0)}){v_2}^{(0)},$ \\ the homotopy equation finds a bounded smooth curve $\Gamma_x^{(0)} \subset \mathcal{{F}}_{(m)} \times (0,1]$ which starts from $(x^{(0)},1)$ and approaches the hyperplane at $\lambda \to 0.$ As $\lambda \to 0,$ the limit set $\mathcal{L} \times \{0\} \subset \mathcal{\bar{F}}_{(m)} \times \{0\}$ of $\Gamma_x^{(0)}$ is nonempty and every point in $\mathcal{L}$ is a solution of the following system of equations:
	\begin{equation}\label{sys}
	\begin{aligned}
		(y-w_1+v_1e+J_f^t(z-w_2+v_2e))=0 \\
	W_1z=0\\
	W_2y=0\\
	y-f(z)=0\\
	(m-\overset{n}{\underset{i=1}{\sum}}(z+w_1)_i-v_2)v_1=0\\
	(m-\overset{n}{\underset{i=1}{\sum}}(y+w_2)_i-v_1)v_2=0\\
	\end{aligned}
	\end{equation}
\end{theorem}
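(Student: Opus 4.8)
\emph{Proof proposal.} The plan is to run the usual homotopy-continuation argument on the arc $\Gamma_x^{(0)}$ supplied by Theorems \ref{reg} and \ref{bdd}: identify the topology of $\Gamma_x^{(0)}$ via the classification of one-dimensional manifolds, eliminate every way the arc could end except by meeting the hyperplane $\lambda=0$, and then pass to the limit in \eqref{homf} to recognize the limit points as solutions of \eqref{sys}.

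\textbf{Step 1 (topology of the arc).} By Theorem \ref{reg}, for almost all $x^{(0)}$ the point $0$ is a regular value of $H_{x^{(0)}}$, so by Lemma \ref{inv} the component $\Gamma_x^{(0)}$ of $H_{x^{(0)}}^{-1}(0)$ through $(x^{(0)},1)$ is a smooth $1$-manifold, and by Lemma \ref{cl} it is diffeomorphic to a circle or an interval. Since $\Gamma_x^{(0)}$ starts from $(x^{(0)},1)$, which lies on the boundary slice $\{\lambda=1\}$ of the domain $\bar{\mathcal{F}}_{(m)}\times(0,1]$, and since the non-degeneracy conditions imposed on $x^{(0)}$ are exactly what make $(x^{(0)},1)$ a regular boundary point (the arc leaves $\{\lambda=1\}$ transversally there and there is no other zero of $H_{x^{(0)}}$ on that slice near $x^{(0)}$), $\Gamma_x^{(0)}$ cannot be a circle; hence it is diffeomorphic to $(0,1]$ with $(x^{(0)},1)$ as its closed endpoint. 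Parametrize it so that the open end corresponds to $t\to 0^{+}$.

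\textbf{Step 2 (the open end runs to $\lambda=0$).} By Theorem \ref{bdd} the arc lies in the bounded set $\bar{\mathcal{F}}_{(m)}\times(0,1]$, so along the open end we may extract $(x^{k},\lambda^{k})\to(\bar x,\bar\lambda)$ with $\bar\lambda\in[0,1]$. We cannot have $\bar\lambda=1$: that slice contains only the closed endpoint. Suppose $\bar\lambda\in(0,1)$. Then $(\bar x,\bar\lambda)\notin\mathcal{F}_{(m)}\times(0,1)$ — otherwise $(\bar x,\bar\lambda)$ is a regular interior zero through which the arc would pass rather than terminate — so $\bar x\in\partial\mathcal{F}_{(m)}$. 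I would rule this out directly from \eqref{homf}. The rows $W_1z=\lambda W_1^{(0)}z^{(0)}$ and $W_2y=\lambda W_2^{(0)}y^{(0)}$ give $z_i(w_1)_i>0$ and $y_i(w_2)_i>0$ for every $\lambda>0$, which excludes the coordinate faces $z_i=0,(w_1)_i=0,y_i=0,(w_2)_i=0$. The rows $(A-v_2)v_1=\lambda(A^{(0)}-v_2^{(0)})v_1^{(0)}$ and $(B-v_1)v_2=\lambda(B^{(0)}-v_1^{(0)})v_2^{(0)}$ have strictly positive right-hand sides; since $A-v_2\ge l>0$ and $B-v_1\ge l>0$ on $\bar{\mathcal{F}}_{(m)}$, this forces $v_1,v_2>0$, excluding the faces $v_1=0,v_2=0$. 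On the remaining faces $A-v_2=l$ and $B-v_1=l$ the same two relations force $v_1=\lambda(A^{(0)}-v_2^{(0)})v_1^{(0)}/l$ or $v_2=\lambda(B^{(0)}-v_1^{(0)})v_2^{(0)}/l$ — precisely the points removed in the definition of $\mathcal{F}_{(m)}$ for $\lambda\in(0,1)$ — while the remaining scalar inequalities on $x^{(0)}$ (those involving $l(A^{(0)}-l)$, $l(B^{(0)}-l)$ and the displayed sums) dispose of the borderline values of $\lambda$. Hence $\bar x\in\mathcal{F}_{(m)}$, a contradiction, so $\bar\lambda=0$; since the $\lambda$-coordinate along the arc is continuous, bounded, and has all its limits at the open end equal to $0$, it converges to $0$ there, so $\Gamma_x^{(0)}$ indeed approaches the hyperplane $\lambda=0$.

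\textbf{Step 3 (limit points solve \eqref{sys}).} The closure of $\Gamma_x^{(0)}$ in $\bar{\mathcal{F}}_{(m)}\times[0,1]$ is compact, so the set $\mathcal{L}\times\{0\}$ of accumulation points of the arc as $\lambda\to0$ is nonempty and contained in $\bar{\mathcal{F}}_{(m)}\times\{0\}$. For $\bar x=(\bar z,\bar y,\bar w_1,\bar w_2,\bar v_1,\bar v_2)\in\mathcal{L}$ choose $(x^{k},\lambda^{k})\in\Gamma_x^{(0)}$ with $x^{k}\to\bar x$, $\lambda^{k}\to0$, and let $k\to\infty$ in $H(x^{k},x^{(0)},\lambda^{k})=0$: by continuity of $f$ and $J_f$ every term carrying the factor $\lambda$ — hence every occurrence of $x^{(0)}$ — vanishes, and \eqref{homf} collapses to exactly \eqref{sys}. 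The step I expect to be the genuine obstacle is Step 2, and within it the two faces $A-v_2=l$ and $B-v_1=l$: this is where the whole list of non-degeneracy conditions on $x^{(0)}$ must be used, and one must also check those conditions are jointly satisfiable, i.e.\ that $\mathcal{F}_{(m)}\ne\emptyset$ — the standing hypothesis of Theorem \ref{bdd}.
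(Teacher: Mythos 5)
Your proposal follows essentially the same route as the paper: classification of the one-dimensional manifold $\Gamma_x^{(0)}$ via Lemma \ref{cl}, exclusion of the circle case through nonsingularity at $\lambda=1$, boundedness from Theorem \ref{bdd}, elimination of all terminal cases except $\bar\lambda=0$ by exploiting the defining inequalities of $\mathcal{F}_{(m)}$ and the conditions on $x^{(0)}$, and finally passing to the limit in \eqref{homf} to obtain \eqref{sys}. The one substantive step you assert rather than prove is that the slice $\lambda=1$ "contains only the closed endpoint": ruling out a second zero $(\bar x,1)$ of $H_{x^{(0)}}(\cdot,1)$ is exactly the paper's cases (i) and (ii), which it settles by solving the resulting quadratic for $\bar v_1$ — using the hypothesis $A^{(0)}B^{(0)}-B^{(0)}v_2^{(0)}-A^{(0)}v_1^{(0)}=0$ to force both roots to equal $v_1^{(0)}$ — and by the three displayed $l$-conditions for the boundary subcase; your appeal to "regular boundary point" only gives local, not global, uniqueness on that slice, so this computation should be supplied. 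Otherwise your treatment of the interior faces ($z_i,y_i,(w_1)_i,(w_2)_i,v_1,v_2>0$ for $\lambda>0$) and of the faces $A-v_2=l$, $B-v_1=l$ matches the paper's case (iii), and your Step 3 is the paper's conclusion verbatim.
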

\begin{proof}

	Note that $\Gamma_x^{(0)}$ is diffeomorphic to a unit circle or a unit interval $(0,1]$ in view of Lemma \ref{cl}. As $\frac{\partial{H(x,x^{(0)},1)}}{\partial{x^{(0)}}}$ is nonsingular, $\Gamma_x^{(0)}$ is diffeomorphic to a unit interval $(0,1].$ Again $\Gamma_x^{(0)}$ is a bounded smooth curve by the Theorem \ref{bdd}. Let $(\bar{x},\bar{\lambda})$ be a limit point of $\Gamma_x^{(0)}.$ We consider four cases:
	\begin{enumerate}
		\item[(i)] $(\bar{x},\bar{\lambda})\in \mathcal{{F}}_{(m)} \times \{1\}.$
		\item[(ii)] $(\bar{x},\bar{\lambda})\in \partial{\mathcal{{F}}_{(m)}} \times \{1\}.$
		\item[(iii)] $(\bar{x},\bar{\lambda})\in \partial{\mathcal{{F}}_{(m)}} \times (0,1).$
		\item[(iv)] $(\bar{x},\bar{\lambda})\in \mathcal{\bar{F}}_{(m)} \times \{0\}.$
	\end{enumerate}
	 Suppose for case (i) the homotopy function \ref{homf} has solution $(\bar{x},1)$, other than the initial solution  $x^{(0)}$. As $\lambda \to 1$, $\bar{y}=y^{(0)}, \bar{z_1}=z_1 ^{0}, \bar{z_2}=z_2 ^{0},\bar{v_1} \neq 0, \bar{v_2} \neq 0$. So for $\lambda \to 1$, $(A-v_2) \to (A^{(0)}-\bar{v_2}), (B-v_1)  \to (B^{(0)}-\bar{v_1})$. Hence from homotopy function \ref{homf}
	 \begin{eqnarray}\label{t1}
	 (A^{(0)}-\bar{v_2})\bar{v_1}=(A^0-{v_2}^{(0)}){v_1}^{(0)}\\ \label{t2}
	 (B^{(0)}-\bar{v_1})\bar{v_2}=(B^0-{v_1}^{(0)}){v_2}^{(0)} \label{t3}
	 \end{eqnarray}
	    From \ref{t3}, $\bar{v_2}=\frac{B^0-{v_1}^{(0)}}{B^{(0)}-\bar{v_1}}{v_2}^{(0)}.$ From equation \ref{t1} $(A^{(0)}-\frac{B^0-{v_1}^{(0)}}{B^{(0)}-\bar{v_1}}{v_2}^{(0)})\bar{v_1}=(A^0-{v_2}^{(0)}){v_1}^{(0)}.$ This implies that $\bar{v_1}=\frac{-(B^0{v_2}^{(0)}-A^0{v_1}^{(0)}-A^0 B^0) \pm \sqrt{(A^0 B^0-B^0{v_2}^{(0)}-A^0{v_1}^{(0)})^2}}{2A^0}$.\\
	    $\implies \bar{v_1}={v_1}^{(0)} \text{or} $ $\frac{A^0 B^0-B^0{v_2}^{(0)} }{A^{(0)}}$. As $\bar{v_1}=\frac{A^0 B^0-B^0{v_2}^{(0)} }{A^{(0)}}={v_1}^{(0)} $ from the condition of choosing the initial point $x^{(0)}, $ the equation $H_{x^{(0)}}(x,1)=0$ has only one solution $x^{(0)}\in  \mathcal{{R}}_{(m)}.$ Hence the case $(i)$ is impossible.\\
	    In case $(ii)$ the homotopy equation \ref{homf} implies that $ \bar{y}=y^{(0)}, \bar{z_1}=z_1 ^{0}, \bar{z_2}=z_2 ^{0}, \bar{v_1} \neq 0, \bar{v_2} \neq 0.$ So $(A-v_2) \to (A^{(0)}-\bar{v_2})$ and $(B-v_1) \to (B^{(0)}-\bar{v_1})$ as $\lambda \to 1.$ From last two components of homotopy equation \ref{homf} we have \begin{equation}\label{ineq}
	(A^{(0)}-\bar{v_2})\bar{v_1}=(A^{(0)}-{v_2}^{(0)}){v_1}^{(0)}, 
	 (B^{(0)}-\bar{v_1})\bar{v_2}=(B^{(0)}-{v_1}^{(0)}){v_2}^{(0)}.
	\end{equation} Three cases may arise. \\
	\textbf{Case 1:} Let $A^{(0)}-\bar{v_2}=l.$ From equation \ref{ineq} \\ $\bar{v_1}=\frac{(A^{(0)}-{v_2}^{(0)}){v_1}^{(0)}}{l},\  (B^{(0)}-\frac{(A^{(0)}-{v_2}^{(0)}){v_1}^{(0)}}{l})\bar{v_2}=(B^{(0)}-{v_1}^{(0)}){v_2}^{(0)} \\ \implies \bar{v_2}=$ $\frac{l(B^{(0)}-{v_1}^{(0)}){v_2}^{(0)}}{lB^{(0)}-A^{(0)}{v_1}^{(0)}+{v_1}^{(0)}{v_2}^{(0)}}=A^{(0)}-l \\ \implies l(B^{(0)}{v_2}^{(0)}-A^{(0)}{v_1}^{(0)})+lB^{(0)}(l-A^{(0)})+A^{(0)}{v_1}^{(0)}(A^{(0)}-{v_2}^{(0)}) = 0,$ contradicts the choosing of initial point.\\	
	\textbf{Case 2:}  Let $B^{(0)}-\bar{v_1}=l.$ From equation \ref{ineq} \\ $\bar{v_2}=\frac{(B^{(0)}-{v_1}^{(0)}){v_2}^{(0)}}{l}, \  (A^{(0)}-\frac{(B^{(0)}-{v_1}^{(0)}){v_2}^{(0)}}{l})\bar{v_1}=(A^{(0)}-{v_2}^{(0)}){v_1}^{(0)} \\ \implies \bar{v_1}=$ $\frac{l(A^{(0)}-{v_2}^{(0)}){v_1}^{(0)}}{lA^{(0)}-B^{(0)}{v_2}^{(0)}+{v_1}^{(0)}{v_2}^{(0)}}=B^{(0)}-l \\ \implies l(A^{(0)}{v_1}^{(0)}-B^{(0)}{v_2}^{(0)})+lA^{(0)}(l-B^{(0)})+B^{(0)}{v_2}^{(0)}(B^{(0)}-{v_1}^{(0)}) = 0,$ contradicts the choosing of initial point.\\
		\textbf{Case 3:}  Let $B^{(0)}-\bar{v_1}=l, \  A^{(0)}-\bar{v_2}=l. $ From equation \ref{ineq} \\ we have $l\bar{v_1}=(A^{(0)}-{v_2}^{(0)}){v_1}^{(0)},l\bar{v_2}=(B^{(0)}-{v_1}^{(0)}){v_2}^{(0)}.\\ \implies l(B^{(0)}-l)=(A^{(0)}-{v_2}^{(0)}){v_1}^{(0)}, l(A^{(0)}-l)=(B^{(0)}-{v_1}^{(0)}){v_2}^{(0)}, $ contradicts the choosing of initial point.
	
	In case $(iii)$ from homotopy equation \ref{homf} we have $\bar{z}>0,\bar{y}>0,\bar{w_1}>0,\bar{w_2}>0.$ Three cases may arise.\\
	\textbf{Case1:} Let $A-v_2 \to \bar{A}-\bar{v_2}=l,$ where $\bar{A}=(m-\overset{n}{\underset{i=1}{\sum}}(\bar{z}+\bar{w_1})_i.$ Then from equation \ref{ineq} we have $\bar{v_1}=\frac{\bar{\lambda}(A^{(0)}-{v_2}^{(0)}){v_1}^{(0)}}{l},$ which contradicts that $\bar{v_1}\in  \partial{\mathcal{{F}}_{(m)}}.$\\
	\textbf{Case2:} Let $B-v_1 \to \bar{B}-\bar{v_1}=l,$ where $\bar{B}=(m-\overset{n}{\underset{i=1}{\sum}}(\bar{y}+\bar{w_2})_i.$  Then from equation \ref{ineq} we have $\bar{v_2}=\frac{\bar{\lambda}(B^{(0)}-{v_1}^{(0)}){v_2}^{(0)}}{l},$ which contradicts that $\bar{v_2}\in  \partial{\mathcal{{F}}_{(m)}}.$\\
	\textbf{Case3:} Let $A-v_2 \to \bar{A}-\bar{v_2}=l,B-v_1 \to \bar{B}-\bar{v_1}=l.$ Then from equation \ref{ineq} we have $\bar{v_1}=\frac{\bar{\lambda}(A^{(0)}-{v_2}^{(0)}){v_1}^{(0)}}{l}$ and $\bar{v_2}=\frac{\bar{\lambda}(B^{(0)}-{v_1}^{(0)}){v_2}^{(0)}}{l},$ which contradicts that $\bar{v_1}\in  \partial{\mathcal{{F}}_{(m)}}$ and $\bar{v_2}\in  \partial{\mathcal{{F}}_{(m)}}.$\\
	  Therefore $(iv)$ is the only possible case. Hence $\bar{x}=(\bar{z},\bar{y},\bar{w_1},\bar{w_2},\bar{v_1},\bar{v_2})$ is a solution of the system of equations \ref{sys}.
	 \end{proof}
\begin{remk}\label{mmf}
	From the homotopy function \ref{homf}  as $\lambda \to 0$ we get $\bar{y}-\bar{w}_1+\bar{J}_f^t(\bar{z}-\bar{w}_2)=0,$ $\bar{y}=f(\bar{z})$ and
$\bar{w}_{1i}\bar{z}_i=0,$ $\bar{w}_{2i}\bar{y}_i=0 \ \forall i\in\{1,2, \cdots n\}$, where $\bar{J}_f$ is the jacobian of $f(z)$ at the point $\bar{z}.$ Now $\bar{w}_1$ and $\bar{w}_2$ can be decomposed as $\bar{w}_1= \bar{y}-\Delta\bar{y} \geq 0$  and $\bar{w}_2= \bar{z}-\Delta\bar{z} \geq 0.$ Now it is clear that $\bar{y}_i \bar{z}_i=\Delta\bar{y}_i \bar{z}_i=\Delta\bar{z}_i\bar{y}_i \ \forall i \in \{1,2, \cdots n\}$ and $\bar{J}_f^t\Delta \bar{z}+\Delta \bar{y}=0.$ This implies that $(\bar{Z}\bar{J}_f^t+\bar{Y})\Delta \bar{z}=0,$ where  $\bar{Y}=$diag$(\bar{y})$ and $\bar{Z}=$diag$(\bar{z}).$
\end{remk}

\begin{theorem}
	The component $\bar{z}$ of $(\bar{z},\bar{y},\bar{w}_1,\bar{w}_2,0) \in \mathcal{L}\times \{0\}$ gives the solution of the complementarity problem\ref{cp}  if and only if $\Delta\bar{z}_i \Delta\bar{y}_i=0 $ or $\bar{w}_{1i}+\bar{w}_{2i}>0  \ \forall i \in \{1,2,\cdots n\}.$  
\end{theorem}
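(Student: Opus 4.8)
The plan is to collapse the limiting system to its essential relations, reduce the claim to a componentwise equivalence, and then settle that equivalence by a short case analysis, with the forward implication handled by contraposition.

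First I would record what the limit point $(\bar{z},\bar{y},\bar{w}_1,\bar{w}_2,0)\in\mathcal{L}\times\{0\}$ satisfies. Since this point lies in $\mathcal{\bar{F}}_{(m)}\subseteq\mathcal{\bar{R}}_{(m)}$, we have $m-\sum_{i=1}^{n}(\bar{z}+\bar{w}_1)_i-\bar{v}_2\geq l>0$ and $m-\sum_{i=1}^{n}(\bar{y}+\bar{w}_2)_i-\bar{v}_1\geq l>0$, so the coefficients multiplying $v_1$ and $v_2$ in the last two equations of \ref{sys} are bounded away from $0$; hence $\bar{v}_1=\bar{v}_2=0$, and \ref{sys} reduces to precisely the relations collected in Remark \ref{mmf}: $\bar{y}=f(\bar{z})$, $\bar{y}-\bar{w}_1+\bar{J}_f^t(\bar{z}-\bar{w}_2)=0$, and $\bar{w}_{1i}\bar{z}_i=0$, $\bar{w}_{2i}\bar{y}_i=0$ for all $i$, together with $\bar{z},\bar{y},\bar{w}_1,\bar{w}_2\geq 0$. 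Writing $\Delta\bar{y}=\bar{y}-\bar{w}_1$ and $\Delta\bar{z}=\bar{z}-\bar{w}_2$, these yield the pointwise identities $\bar{z}_i\bar{y}_i=\Delta\bar{y}_i\bar{z}_i=\Delta\bar{z}_i\bar{y}_i$ for every $i$.

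Next I would observe that, because $\bar{z}\geq 0$ and $f(\bar{z})=\bar{y}\geq 0$ already hold, $\bar{z}$ solves the complementarity problem \ref{cp} if and only if $\bar{z}^T\bar{y}=\sum_{i=1}^{n}\bar{z}_i\bar{y}_i=0$; being a sum of nonnegative terms, this vanishes if and only if $\bar{z}_i\bar{y}_i=0$ for each $i$. Hence it suffices to prove, for a fixed index $i$, that $\bar{z}_i\bar{y}_i=0$ holds if and only if $\Delta\bar{z}_i\Delta\bar{y}_i=0$ or $\bar{w}_{1i}+\bar{w}_{2i}>0$. For the ``if'' direction: if $\bar{w}_{1i}>0$ then $\bar{z}_i=0$ by $\bar{w}_{1i}\bar{z}_i=0$, and if $\bar{w}_{2i}>0$ then $\bar{y}_i=0$ by $\bar{w}_{2i}\bar{y}_i=0$, so $\bar{z}_i\bar{y}_i=0$ in either case; and if $\Delta\bar{z}_i=0$ then $\bar{z}_i\bar{y}_i=\Delta\bar{z}_i\bar{y}_i=0$, symmetrically if $\Delta\bar{y}_i=0$. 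For the ``only if'' direction I would argue by contraposition: if $\Delta\bar{z}_i\Delta\bar{y}_i\neq 0$ and $\bar{w}_{1i}+\bar{w}_{2i}=0$, then $\bar{w}_{1i}=\bar{w}_{2i}=0$ by nonnegativity, so $\Delta\bar{y}_i=\bar{y}_i$ and $\Delta\bar{z}_i=\bar{z}_i$, whence $\bar{z}_i\bar{y}_i=\Delta\bar{z}_i\Delta\bar{y}_i\neq 0$. Assembling these componentwise equivalences over all $i$ yields the theorem.

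The argument is largely bookkeeping once Remark \ref{mmf} is in hand; the only step that calls for a little care is the reduction of \ref{sys} to that system, i.e. verifying $\bar{v}_1=\bar{v}_2=0$ at the limit, which is exactly why I would invoke the strict-margin inequalities built into the definition of $\mathcal{\bar{R}}_{(m)}$. Beyond that I anticipate no genuine obstacle, other than keeping the two symmetric roles ($z\leftrightarrow y$, $w_1\leftrightarrow w_2$, $\Delta\bar{z}\leftrightarrow\Delta\bar{y}$) consistently aligned throughout the case analysis.
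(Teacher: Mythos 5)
Your proof is correct and follows essentially the same route as the paper's: both reduce the claim to the componentwise equivalence between $\bar{z}_i\bar{y}_i=0$ and the disjunction $\Delta\bar{z}_i\Delta\bar{y}_i=0$ or $\bar{w}_{1i}+\bar{w}_{2i}>0$, using the identities of Remark \ref{mmf}, and then argue by cases. The one notable difference is that your forward direction is carried out by contraposition, which actually supplies the justification the paper leaves implicit in its Case 3 (the subcase $\bar{z}_i=\bar{y}_i=0$): under $\bar{w}_{1i}+\bar{w}_{2i}=0$ nonnegativity gives $\bar{w}_{1i}=\bar{w}_{2i}=0$, hence $\Delta\bar{y}_i=\bar{y}_i$ and $\Delta\bar{z}_i=\bar{z}_i$, so $\bar{z}_i\bar{y}_i=\Delta\bar{z}_i\Delta\bar{y}_i$, making the claimed disjunction immediate.
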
 
\begin{proof}
		Suppose $\bar{z} \geq 0$ and $\bar{y}=f(\bar{z}) \geq 0$ give the solution of the complementarity problem\ref{cp}. Then $\bar{z}_i\bar{y}_i=0$ \ $ \forall i \in \{1,2,\cdots n\}.$ This implies that $\bar{z}_i=0$ or $\bar{y}_i=0$ \ $ \forall i \in \{1,2,\cdots n\}.$ Now we consider the following cases.\\
	\NI	Case 1:	For atleast one $i \in  \{1,2,\cdots n\},$ let $\bar{z}_{i}>0, \bar{y}_{i}=0.$  In view of Remark \ref{mmf}, this implies that $\Delta\bar{y}_i=0 \implies \Delta\bar{z}_i \Delta\bar{y}_i=0.$  \\
	\NI	Case 2:	For atleast one $i \in  \{1,2,\cdots n\},$ let $\bar{y}_{i}>0, \bar{z}_{i}=0.$ In view of \ref{mmf}, this implies that $\Delta\bar{z}_i=0 \implies \Delta\bar{z}_i \Delta\bar{y}_i=0.$  \\
	\NI	Case 3:	For atleast one $i \in  \{1,2,\cdots n\},$ let $\bar{y}_{i}=0, \bar{z}_{i}=0.$  This implies that either $\Delta\bar{y}_i\Delta\bar{z}_i=0$ or $\bar{w}_{1i}+\bar{w}_{2i}>0.$ 
	
	Conversely, let consider $\Delta\bar{z}_i \Delta\bar{y}_i=0 $ or $\bar{w}_{1i}+\bar{w}_{2i}>0 \ \forall i \in \{1,2,\cdots n\}.$ Let $\forall i \in \{1,2,\cdots n\}, \ \Delta\bar{z}_i \Delta\bar{y}_i=0 $ implies either $\Delta\bar{z}_i=0$ or $\Delta\bar{y}_i=0.$ This implies that $\bar{y}_i \bar{z}_i=0 \  \forall i \in \{1,2,\cdots n\}.$ Therefore $\bar{y}$ and $\bar{z}$ give the solution of given complementarity problem \ref{cp}.
	\NI	Let consider $\bar{w}_{1}+\bar{w}_{2}>0.$ Then three cases will arise.
	
	\NI	Case 1:	Let $\bar{w}_{1i}>0, \bar{w}_{2i}=0$ for atleast one $i \in  \{1,2,\cdots n\}.$ This implies that $\bar{z}_i=0$ and $\bar{y}_i \geq 0.$ \\
	Case 2: Let $\bar{w}_{1i}=0, \bar{w}_{2i}>0$ for atleast one $i \in  \{1,2,\cdots n\}.$ This implies that $\bar{z}_i \geq 0$ and $\bar{y}_i=0.$ \\
	Case 3:  Let $\bar{w}_{1i}>0, \bar{w}_{2i}>0$ for atleast one $i \in  \{1,2,\cdots n\}.$ This implies that $\bar{z}_i=0$ and $\bar{y}_i=0.$ \\
	Considering the above three cases $\bar{z}$ and $\bar{y}$ solves the compplementarity problem \ref{cp}.\\
  	\end{proof}
\begin{theorem}
	If the nonlinear function $f(z)$ is a $P_0$ function, then the component $\bar{z}$ of $(\bar{z},\bar{y} ,\bar{w}_1,\bar{w}_2,0) \in \mathcal{L}\times \{0\}$ gives the solution of the nonlinear complementarity problem \ref{cp}.
\end{theorem}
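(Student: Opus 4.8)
The plan is to deduce the statement from the preceding theorem: it suffices to show that for every index $i$ either $\Delta\bar z_i\Delta\bar y_i=0$ or $\bar w_{1i}+\bar w_{2i}>0$. I would argue by contradiction, assuming some index $i_0$ has $\bar w_{1i_0}+\bar w_{2i_0}=0$ and $\Delta\bar z_{i_0}\Delta\bar y_{i_0}\neq0$. Since $\bar w_1,\bar w_2\geq0$ the first condition gives $\bar w_{1i_0}=\bar w_{2i_0}=0$, so by Remark~\ref{mmf} $\Delta\bar z_{i_0}=\bar z_{i_0}$ and $\Delta\bar y_{i_0}=\bar y_{i_0}$; being nonzero and nonnegative, $\bar z_{i_0}>0$, $\bar y_{i_0}>0$, and in particular $\Delta\bar z_{i_0}\Delta\bar y_{i_0}>0$, so $\Delta\bar z\neq0$.

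Next I would record two sign facts. A short case analysis according to whether $\bar z_i$ and $\bar y_i$ vanish — using $\bar w_{1i}\bar z_i=0$ and $\bar w_{2i}\bar y_i=0$ from Remark~\ref{mmf} together with the nonnegativity of $\bar z,\bar y,\bar w_1,\bar w_2$ — shows $\Delta\bar z_i\Delta\bar y_i\geq0$ for every $i$. Combining this with the identity $\bar J_f^t\Delta\bar z+\Delta\bar y=0$ of Remark~\ref{mmf} yields $\Delta\bar z_i(\bar J_f^t\Delta\bar z)_i=-\Delta\bar z_i\Delta\bar y_i\leq0$ for every $i$, with strict inequality at $i=i_0$.

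Now the hypothesis enters. Since $f$ is a $P_0$ function, $\bar J_f=J_f(\bar z)$ is a $P_0$ matrix, hence so is $\bar J_f^t$ and each of its principal submatrices. Let $\alpha=\{\,i:\Delta\bar z_i\neq0\,\}$; restricting $\bar J_f^t\Delta\bar z=-\Delta\bar y$ to $\alpha$ gives $(\bar J_f^t)_{\alpha\alpha}(\Delta\bar z)_\alpha=-(\Delta\bar y)_\alpha$ with $(\Delta\bar z)_\alpha$ nowhere zero. Applying the standard characterization of $P_0$ matrices — for every nonzero $x$ there is $k$ with $x_k\neq0$ and $x_k(Mx)_k\geq0$ — and comparing with the sign fact above forces $\Delta\bar z_k\Delta\bar y_k=0$, hence $\Delta\bar y_k=0$, for some $k\in\alpha$. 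Iterating this reduction along a suitable nested chain of principal submatrices of $(\bar J_f^t)_{\alpha\alpha}$ I would conclude $\Delta\bar z_j\Delta\bar y_j=0$ for every $j\in\alpha$, in particular at $j=i_0$, contradicting the first paragraph; hence the dichotomy holds at every index and $\bar z$ solves \eqref{cp}.

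The step I expect to be the real obstacle is exactly this iteration: deleting a coordinate of $\Delta\bar z$ changes the reduced system, so the induction has to run along a carefully chosen descending chain of supports, and it is here that one likely needs more than the bare $P_0$ existence statement — e.g. that $(\bar J_f^t)_{\alpha\alpha}+\varepsilon I$ is a $P$ matrix for every $\varepsilon>0$, or a finite case analysis in the spirit of the preceding theorem's proof. A backup route is to restrict $(\bar Z\bar J_f^t+\bar Y)\Delta\bar z=0$ to the block $\beta=\{\,i:\bar z_i>0,\ \bar y_i>0\,\}$, where $\bar w_1,\bar w_2$ vanish, $\Delta\bar z=\bar z$, $\Delta\bar y=\bar y$, and $\bar Z_{\beta\beta},\bar Y_{\beta\beta}$ are positive definite, so that $(\bar J_f^t)_{\beta\beta}+\bar Z_{\beta\beta}^{-1}\bar Y_{\beta\beta}$ is a $P$ matrix, hence nonsingular, forcing $\bar z_\beta=0$ and $\beta=\varnothing$ — once the coupling to the coordinates of $\Delta\bar z$ outside $\beta$ is controlled, which is the same difficulty in disguise.
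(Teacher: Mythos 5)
Your plan is, in essence, the paper's plan made honest: assume a bad index $i_0$ exists, use Remark~\ref{mmf} to obtain $\bar z_{i_0}=\Delta\bar z_{i_0}>0$, $\bar y_{i_0}=\Delta\bar y_{i_0}>0$, read off $\Delta\bar z_{i_0}(\bar J_f^t\Delta\bar z)_{i_0}<0$ from $\bar J_f^t\Delta\bar z+\Delta\bar y=0$, and try to contradict the $P_0$ property. The paper's proof stops right there and declares a contradiction from this single-index inequality. That step is logically insufficient: the Fiedler--Pt\'ak characterization of $P_0$ is \emph{existential} --- for every nonzero $x$ there is \emph{some} $k$ with $x_k\neq0$ and $x_k(Mx)_k\geq0$ --- so exhibiting one index where the product is negative does not negate it; one would need the sign to be negative at \emph{every} index with $\Delta\bar z_k\neq 0$. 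You spotted this, and your ``sign fact'' $\Delta\bar z_i\Delta\bar y_i\geq0$ for all $i$ (hence $\Delta\bar z_i(\bar J_f^t\Delta\bar z)_i\leq 0$ for all $i$) is correct and is a genuine strengthening of the paper's argument; the four-way case check using $\bar w_{1i}\bar z_i=0$, $\bar w_{2i}\bar y_i=0$ and the nonnegativity of $\bar w_1=\bar y-\Delta\bar y$, $\bar w_2=\bar z-\Delta\bar z$ goes through.

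But the remaining step is a real gap, and you diagnose it correctly. Applying $P_0$ to $x=\Delta\bar z$ only produces some index $k$ with $\Delta\bar z_k\neq0$ and $\Delta\bar z_k(\bar J_f^t\Delta\bar z)_k=0$, i.e.\ $\Delta\bar y_k=0$. Such a $k$ can never lie in the bad set $\beta=\{i:\bar z_i>0,\bar y_i>0\}$ (there $\Delta\bar y_k=\bar y_k>0$), so the conclusion lands on an index that is already complementary and gives no information about $i_0$. Restricting to a principal submatrix does not help directly because $(\bar J_f^t\Delta\bar z)_\beta=(\bar J_f^t)_{\beta\beta}(\Delta\bar z)_\beta+(\bar J_f^t)_{\beta\gamma}(\Delta\bar z)_\gamma$ and $\Delta\bar z$ need not vanish on $\gamma$ (indices of type A2 or B2 in your notation can carry nonzero $\Delta\bar z$), so the ``inductive descent on the support'' you sketch, and the backup $(\bar Z\bar J_f^t+\bar Y)_{\beta\beta}$-restriction, both run into exactly the coupling you flag. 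I do not see a purely matrix-theoretic way to remove that coupling from the information recorded in Remark~\ref{mmf} alone; the paper's proof does not attempt to address it. Closing the gap seems to require either a stronger hypothesis on $\bar J_f$ (the nonsingular-principal-minors case, treated separately in the paper, or a $P$-matrix assumption) or an appeal to additional structure of the limit set $\mathcal L$ of the homotopy path beyond those algebraic identities.
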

\begin{proof}
	Let $f(z)$ be a $P_0$ function. Then the jacobian matrix of the nonlinear function at a point $z$,  $\bar{J}_f$ is a $P_0$ matrix. Assume that the component $\bar{z}$ of $(\bar{z},\bar{y},\bar{w}_1,\bar{w}_2,0) \in \mathcal{L}\times \{0\}$ does not give the solution of the nonlinear complementarity problem  \ref{cp}. Hence $\Delta\bar{z}_i \Delta\bar{y}_i\neq0 $ and $\bar{w}_{1i}+\bar{w}_{2i}=0$ for atleast one $i$. Then $\Delta\bar{z}_i \neq 0, \Delta\bar{y}_i\neq0 , \bar{w}_{1i}=0, \bar{w}_{2i}=0.$  Now $\bar{w}_{1i}=\bar{y}_i-\Delta\bar{y}_i=0$ and $\Delta\bar{z}_i \Delta\bar{y}_i\neq0$ $\implies \bar{y}_i=\Delta\bar{y}_i>0$. In similar way $\bar{w}_{2i}=\bar{z}_i-\Delta\bar{z}_i=0$ and $\Delta\bar{z}_i \Delta\bar{y}_i\neq0$ $\implies \bar{z}_i=\Delta\bar{z}_i>0$. As $(\bar{z},\bar{y},\bar{w}_1,\bar{w}_2,0) \in  \mathcal{\bar{F}}_{(m)}\times \{0\}$, $v_1=0, v_2=0$.  From Equation \ref{sys}, $\Delta\bar{y}_i + ({\bar{J}_f}^t\Delta\bar{z})_i=0.$  This implies that $ ({\bar{J}_f}^t\Delta\bar{z})_i<0$ and also $(\bar{z})_i ({\bar{J}_f}^t\Delta\bar{z})_i<0.$ This contradicts that $\bar{J}_f$ is a $P_0$-matrix. Therefore the component $\bar{z}$ of $(\bar{z},\bar{y},\bar{w}_1,\bar{w}_2,0) \in \mathcal{L}\times \{0\}$ gives the solution of the nonlinear complementarity problem \ref{cp}.
\end{proof}

\begin{theorem}\label{22222}
	Suppose the matrix $(\bar{Z}\bar{J}_f^t+\bar{Y})$  is nonsingular, where $\bar{Y}=$diag$(\bar{y})$ and $\bar{Z}=$diag$(\bar{z}).$ Then $\bar{z}$ solves the complementarity problem \ref{cp}.		 	
\end{theorem}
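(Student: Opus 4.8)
The plan is to leverage Remark \ref{mmf}, which already records that at the limit point $(\bar z,\bar y,\bar w_1,\bar w_2,0)\in\mathcal{L}\times\{0\}$ we have the decomposition $\bar w_1=\bar y-\Delta\bar y\geq 0$, $\bar w_2=\bar z-\Delta\bar z\geq 0$ together with the identity $(\bar Z\bar J_f^t+\bar Y)\Delta\bar z=0$. If the matrix $\bar Z\bar J_f^t+\bar Y$ is nonsingular, this forces $\Delta\bar z=0$ immediately. From the relation $\bar J_f^t\Delta\bar z+\Delta\bar y=0$ (also in Remark \ref{mmf}) we then get $\Delta\bar y=0$ as well. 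Hence $\bar w_1=\bar y\geq 0$ and $\bar w_2=\bar z\geq 0$, so in particular $\bar z\geq 0$, and since $\bar y=f(\bar z)$ from the fourth equation of \ref{sys}, also $f(\bar z)\geq 0$.

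It then remains to establish complementarity, $\bar z_i\bar y_i=0$ for every $i$. Here I would again invoke Remark \ref{mmf}, which states $\bar y_i\bar z_i=\Delta\bar y_i\bar z_i=\Delta\bar z_i\bar y_i$ for all $i$; since $\Delta\bar z=0$, the term $\Delta\bar z_i\bar y_i$ vanishes, hence $\bar y_i\bar z_i=0$ for all $i$. Combining the three facts $\bar z\geq 0$, $f(\bar z)=\bar y\geq 0$, and $\bar z^T f(\bar z)=\sum_i\bar z_i\bar y_i=0$, we conclude that $\bar z$ satisfies \ref{cp}, i.e. $\bar z$ solves the complementarity problem.

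The argument is essentially a short deduction once Remark \ref{mmf} is in hand, so the only real content is checking that the remark's identities are applicable at the limit point and that the nonsingularity hypothesis is exactly what is needed to kill $\Delta\bar z$. The one place to be careful is the passage $W_1 z\to 0$, $W_2 y\to 0$ as $\lambda\to 0$ underlying the decomposition in Remark \ref{mmf}: one should note that this is precisely what the previous theorem established, namely that the limit set lies in the solution set of \ref{sys}, and that $\bar w_1,\bar w_2\geq 0$ because $\Gamma_x^{(0)}$ stays in $\bar{\mathcal{F}}_{(m)}$ by Theorem \ref{bdd}. I do not anticipate a genuine obstacle; the proof is a corollary-style consequence of the material already developed, with the nonsingularity assumption doing all the work.
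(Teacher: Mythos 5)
Your argument follows the paper's proof exactly: invoke Remark \ref{mmf} to get $(\bar Z\bar J_f^t+\bar Y)\Delta\bar z=0$, use nonsingularity to conclude $\Delta\bar z=0$, and then read off complementarity and nonnegativity from the remark's identities. You have merely spelled out in full the steps the paper compresses into ``from remark \ref{mmf} it is clear that $\Delta z=0$, this implies $\bar z$ solves the complementarity problem,'' so the route is identical and correct.
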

\begin{proof}
		Let $(\bar{Z}\bar{J}_f^t+\bar{Y})$ is nonsingular matrix. Now from remark \ref{mmf} it is clear that $\Delta z=0.$  This implies that $\bar{z}$ solves the complementarity problem \ref{cp}.
\end{proof}
\begin{theorem}
    If the jacobian matrix $\bar{J}_f$ has nonsingular principal minors, then the component $\bar{z}$ of $(\bar{z}, \bar{y},\bar{w}_1,\bar{w}_2,0) \in \mathcal{L}\times \{0\}$ solves the nonlinear complementarity problem \ref{cp}.
	\end{theorem}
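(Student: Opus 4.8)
The plan is to argue by contradiction, leaning on the characterization established just above: the component $\bar{z}$ solves \ref{cp} if and only if, for every $i\in\{1,\dots ,n\}$, either $\Delta\bar{z}_i\Delta\bar{y}_i=0$ or $\bar{w}_{1i}+\bar{w}_{2i}>0$. So suppose $\bar{z}$ does \emph{not} solve \ref{cp}. Since $(\bar{z},\bar{y},\bar{w}_1,\bar{w}_2,0)\in\mathcal{\bar{F}}_{(m)}\times\{0\}$ we have $\bar{z}\ge 0$, $\bar{y}=f(\bar{z})\ge 0$, $\bar{w}_1\ge 0$, $\bar{w}_2\ge 0$, so there is an index $i_0$ with $\bar{z}_{i_0}\bar{y}_{i_0}>0$; by the characterization this index must satisfy $\bar{w}_{1,i_0}=\bar{w}_{2,i_0}=0$ together with $\Delta\bar{z}_{i_0}\ne 0$ and $\Delta\bar{y}_{i_0}\ne 0$, and in particular $\Delta\bar{z}\ne 0$.

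From Remark \ref{mmf} (equivalently, from \ref{sys} at $\lambda\to 0$ with $\bar{v}_1=\bar{v}_2=0$) I would record the relations $\bar{J}_f^t\,\Delta\bar{z}=\bar{w}_1-\bar{y}$, $\Delta\bar{z}=\bar{z}-\bar{w}_2$, $\bar{w}_{1i}\bar{z}_i=0$ and $\bar{w}_{2i}\bar{y}_i=0$ for all $i$. Writing $u:=\Delta\bar{z}\neq 0$ and expanding, using the two complementarity identities to kill the cross terms $\bar{z}_i\bar{w}_{1i}$ and $\bar{y}_i\bar{w}_{2i}$, one gets for every $i$
\[
u_i\,(\bar{J}_f^t u)_i=(\bar{z}_i-\bar{w}_{2i})(\bar{w}_{1i}-\bar{y}_i)=-\bar{z}_i\bar{y}_i-\bar{w}_{1i}\bar{w}_{2i}\le 0 .
\]
Thus $u$ is a nonzero vector satisfying $u_i(\bar{J}_f^t u)_i\le 0$ for all $i$, and moreover $u_{i_0}(\bar{J}_f^t u)_{i_0}=-\bar{z}_{i_0}\bar{y}_{i_0}<0$.

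It then remains to contradict this using the hypothesis on the principal minors of $\bar{J}_f$. The cleanest route is the classical sign characterization of $P$-matrices: if all principal minors of $\bar{J}_f$ are positive then $\bar{J}_f^t$ is a $P$-matrix too, so for the nonzero vector $u$ there must exist an index $i$ with $u_i(\bar{J}_f^t u)_i>0$, contradicting the displayed inequality; hence $\bar{z}$ solves \ref{cp}. Alternatively one can stay closer to Theorem \ref{22222} and work on the support $\beta=\{i:u_i\ne 0\}\ni i_0$: the identity $(\bar{Z}\bar{J}_f^t+\bar{Y})u=0$ restricts to $(\bar{Z}\bar{J}_f^t+\bar{Y})_{\beta\beta}u_\beta=0$ with $u_\beta\ne 0$, so $\det\big[(\bar{Z}\bar{J}_f^t+\bar{Y})_{\beta\beta}\big]=0$, and expanding this determinant by multilinearity in its rows writes it as a nonnegative combination of the principal minors of $\bar{J}_f$ (with weights formed from the nonnegative entries of $\bar{Z}$ and $\bar{Y}$), which cannot vanish once those minors are controlled. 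I expect the real obstacle to be exactly this last step: one must pin down the precise condition on the principal minors of $\bar{J}_f$ that makes the obstruction impossible — nonvanishing alone is not enough, one essentially needs a common sign (the $P$-property) — and one must also dispose of the indices $i$ with $\bar{z}_i=\bar{y}_i=0$, for which the $i$-th equation of $(\bar{Z}\bar{J}_f^t+\bar{Y})u=0$ degenerates to $0=0$; these can be dropped from $\beta$, since whenever $u_i\ne 0$ there one has $\bar{w}_{2i}>0$ and hence $\bar{w}_{1i}+\bar{w}_{2i}>0$, so such indices already satisfy the characterization.
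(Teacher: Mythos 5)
Your reduction to a sign obstruction is correct as far as it goes. From Remark \ref{mmf}, with $u=\Delta\bar z$ one indeed has $\bar J_f^t u=\bar w_1-\bar y$ and $u=\bar z-\bar w_2$, hence
\[
u_i\,(\bar J_f^t u)_i=(\bar z_i-\bar w_{2i})(\bar w_{1i}-\bar y_i)=-\bar z_i\bar y_i-\bar w_{1i}\bar w_{2i}\le 0\quad\text{for all }i,
\]
with strict inequality at the offending index $i_0$, so $u\neq 0$. If $\bar J_f$ were a $P$-matrix this would finish the proof at once. But you have correctly diagnosed, and not repaired, the gap: the stated hypothesis is only that the principal minors of $\bar J_f$ are \emph{nonzero}, with no common sign, and neither of your closing arguments survives that weakening. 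The $P$-matrix sign characterization is unavailable, and in your second route the multilinear expansion of $\det\bigl[(\bar Z\bar J_f^t+\bar Y)_{\beta\beta}\bigr]$ is a nonnegative combination of principal minors of $\bar J_f^t$ of \emph{mixed orders}, which can vanish when those minors are nonzero but of varying signs. So the proposal as written proves the theorem only under a strictly stronger hypothesis and explicitly leaves the stated one open.

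For comparison, the paper closes this step differently: it partitions the indices into blocks $p$ ($\bar y_i>0=\bar z_i$), $q$ ($\bar y_i>0,\ \bar z_i>0$) and $r$ ($\bar z_i>0=\bar y_i$), realizes $\bar Z\bar J_f^t+\bar Y$ as the Schur complement in the $2n\times 2n$ matrix $\left[\begin{smallmatrix}\bar Y&\bar Z\\ -\bar J_f^t& I\end{smallmatrix}\right]$, and row-reduces to claim $\det(\bar Z\bar J_f^t+\bar Y)=(-1)^r\det(K')$ with $K'$ a principal submatrix of $-\bar J_f^t$, whence nonsingularity and Theorem \ref{22222}. Your worry is precisely where that reduction is suspect: the $q$-block (the indices with $\bar z_i\bar y_i>0$, which are the whole point of the contradiction) contributes terms of the form $\bar Y_q+\bar Z_q(\cdot)$ that are not a single principal minor. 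Already for $n=1$ with $\bar z_1,\bar y_1>0$ one has $\det(\bar Z\bar J_f^t+\bar Y)=\bar y_1+\bar z_1 f'(\bar z_1)$, which vanishes for suitable $f$ with $f'(\bar z_1)\neq 0$. So your instinct that ``nonvanishing alone is not enough'' is not a shortcoming of your write-up relative to the paper; it points at a genuine weakness in the hypothesis that the paper's row reduction glosses over. A complete and defensible version of the result should assume the $P$-property (or otherwise control the signs of the relevant principal minors), and with that hypothesis your first argument is the cleanest proof. Your handling of the degenerate indices ($\bar z_i=\bar y_i=0$, where $u_i\neq 0$ forces $\bar w_{2i}>0$) is fine.
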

	\begin{proof}
		Consider that the jacobian matrix $\bar{J}_f$ has nonsingular principal minors. Then the transpose of the jacobian matrix $\bar{J}_f ^t$ has nonsingular principal minors. By theorem \ref{22222}, if the matrix $(\bar{Z}\bar{J}_f^t+\bar{Y})$ is nonsingular, then $\bar{z}$ solves the nonlinear complementarity problem, where $\bar{Y}=\text{diag}(\bar{y}),$ $\bar{Z}=\text{diag}(\bar{z}).$  Let $\tilde{\mathcal{A'}}=\left[\begin{array}{cc} 
			\bar{Y} & \bar{Z}\\
			-\bar{J}_f^t & I\\
		\end{array}\right]$.  Then $\det(\tilde{\mathcal{A'}})= \det(\bar{Y}+\bar{Z}\bar{J}_f^t)$.  Assume that the component $\bar{z}$ of $(\bar{z},\bar{y},\bar{w}_1,\bar{w}_2,0) \in \mathcal{L}\times \{0\}$ is not the solution of the nonlinear complementarity problem. Then there exists atleast one $i$, such that $\bar{z}_i\bar{y}_i>0$. Without loss of generality  $\bar{y}$ and $\bar{z}$ can be represented as $\bar{y}=\left[\begin{array}{c} 
			\bar{y}_p \\
			\bar{y}_q\\
			\bar{o}_r\\
		\end{array}\right]$, $\bar{z}=\left[\begin{array}{c} 
			\bar{o}_p \\
			\bar{z}_q\\
			\bar{z}_r\\
		\end{array}\right]$, where $\bar{y}_p\in {R^p}_{++}, \ \bar{y}_q, \bar{x}_q \in {R^q}_{++}, \ \bar{z}_r\in {R^r}_{++}$, \ $\bar{o}_r\in R^r, \ \bar{o}_p \in R^p$ and $\bar{o}_r, \ \bar{o}_p $ are vectors with all zeros. Here $(\bar{y}_q)_i(\bar{z}_q)_i>0$ and $\bar{Y}=\text{diag}(\bar{y}), \bar{Z}=\text{diag}(\bar{z})$.  Now we can rewrite $\left[\begin{array}{cc} 
			\bar{Y} & \bar{Z}\\
			-\bar{J}_f^t & I\\
		\end{array}\right]=\left[\begin{array}{cccccc} 
			\bar{Y}_p & \bar{O}_q & \bar{O}_r & \bar{O}_p & \bar{O}_q & \bar{O}_r \\
			\bar{O}_p & \bar{Y}_q & \bar{O}_r & \bar{O}_p & \bar{Z}_q & \bar{O}_r\\
			\bar{O}_p & \bar{O}_q & \bar{O}_r & \bar{O}_p & \bar{O}_q & \bar{Z}_r\\
			M' & B' & C' & I_p & \bar{O}_q & \bar{O}_r\\
			D' & E' & F' & \bar{O}_p & I_q & \bar{O}_r\\
			G' & H' & K' & \bar{O}_p & \bar{O}_q & I_r\\
		\end{array}\right]$, where\\ $-\bar{J}_f^t=\left[\begin{array}{ccc} 
			M' & B' & C'\\
			D' & E' & F' \\
			G' & H' & K' \\
		\end{array}\right]$, \ $\bar{Y}=\left[\begin{array}{ccc} 
			\bar{Y}_p & \bar{O}_q & \bar{O}_r \\
			\bar{O}_p & \bar{Y}_q & \bar{O}_r \\
			\bar{O}_p & \bar{O}_q & \bar{O}_r \\
		\end{array}\right]$, \ $\bar{Z}=\left[\begin{array}{ccc} 
			\bar{O}_p & \bar{O}_q & \bar{O}_r \\
			\bar{O}_p & \bar{Z}_q & \bar{O}_r\\
			\bar{O}_p & \bar{O}_q & \bar{Z}_r\\
		\end{array}\right]$, \ $\bar{Z}_q=\text{diag}(\bar{z}_q)$, \ $\bar{Z}_r=\text{diag}(\bar{z}_r)$, \ $\bar{Y}_q=\text{diag}(\bar{y}_q)$, $\bar{Y}_p=\text{diag}(\bar{y}_p)$, \ $\bar{O}_p=\text{diag}(\bar{o}_p)$, \ $\bar{O}_q=\text{diag}(\bar{o}_q)$ \ $\bar{O}_r=\text{diag}(\bar{o}_r)$, \ $M',D',G', I_p\in R^{p \times p}$, \ $B',E',H', I_q \in R^{q \times q}$, \ $C',F',K', I_r \in R^{r \times r}$ and $I_p,I_q,I_r$ are identity matrices.  By elementary row operations we can get
		\vsp
		$\tilde{\mathcal{B'}}=\left[\begin{array}{cccccc} 
			I & \bar{O}_q & \bar{O}_r & \bar{O}_p & \bar{O}_q & \bar{O}_r \\
			\bar{O}_p & I & \bar{O}_r & \bar{O}_p & \bar{Z}_q{\bar{Y}_q}^{-1} & \bar{O}_r\\
			\bar{O}_p & \bar{O}_q & \bar{O}_r & \bar{O}_p & \bar{O}_q & I\\
			M' & B' & C' & I & \bar{O}_q & \bar{O}_r\\
			D' & E' & F' & \bar{O}_p & I & \bar{O}_r\\
			G' & H' & K' & \bar{O}_p & \bar{O}_q & I\\
		\end{array}\right]$.
		\vsp
		By interchanging rows this matrix reduces to \\ $\tilde{\mathcal{C'}}=$ $\left[\begin{array}{cccccc} 
			I & \bar{O}_q & \bar{O}_r & \bar{O}_p & \bar{O}_q & \bar{O}_r \\
			\bar{O}_p & I & \bar{O}_r & \bar{O}_p & \bar{Z}_q{\bar{Y}_q}^{-1} & \bar{O}_r\\
			-G' & -H' & -K' & \bar{O}_p & \bar{O}_q & \bar{O}_r\\
			M' & B' & C' & I & \bar{O}_q & \bar{O}_r\\
			D' & E' & F' & \bar{O}_p & I & \bar{O}_r\\
			G' & H' & K' & \bar{O}_p & \bar{O}_q & I\\
		\end{array}\right]$.\\ Hence $\det(\tilde{\mathcal{A'}})= \det(\tilde{\mathcal{C'}})=(-1)^r\det(K')\neq 0$. Therefore by theorem \ref{22222} $\bar{z}$ solves the nonlinear complementarity problem. This contradicts our assumption. Hence the component $\bar{z}$ of $(\bar{z},\bar{y},\bar{w}_1,\bar{w}_2,0) \in \mathcal{L}\times \{0\}$ is the solution of the nonlinear complementarity problem \ref{cp}.
	\end{proof}

\begin{remk}
	Now we trace the homotopy path $\Gamma_x^{(0)} \subset \mathcal{{F}}_{(m)} \times (0,1]$ from the initial point $(x^{(0)},1)$ until $\lambda \to 0$ and find the solution of the given complementarity problem \ref{cp} under some assumptions. Let $s$ denote the arc length of $\Gamma_x^{(0)}$, we can parameterize the homotopy path $\Gamma_x^{(0)}$ with respect to $s$ in the following form\\
	\begin{equation}\label{ss}
	H_{x^{(0)}} (x(s),\lambda (s))=0, \ 
	x(0)=x^{(0)}, \   \lambda(0)=1.
	\end{equation} 
	Now differentiating \ref{ss} with respect to $s$ we obtain the following system of ordinary differential equations with given initial values\cite{fan}
	\begin{equation}
	H'_{x^{(0)}} (x(s),\lambda (s))\left[\begin{array}{c} 
	\frac{dx}{ds}\\
	\frac{d\lambda}{ds}\\
	\end{array}\right]=0, \
	\|( \frac{dx}{ds},\frac{d\lambda}{ds})\|=1, \ 
	x(0)=x^{(0)}, \   \lambda(0)=1, \ \frac{d\lambda}{ds}(0)<0, 
	\end{equation} 
	and the $x$-component of $(x(\bar{s}),\lambda (\bar{s}))$ gives the solution of the complementarity problem for $\lambda (\bar{s})=0.$
\end{remk}

Now we use the homotopy continuation method with some modifications to trace the homotopy path $\Gamma_x^{(0)}$ numerically. For details see ( cite our homotopy paper)
\subsection{Algorithm}\label{homosin}
\textbf{Step 0:} Set $i=i_s=0.$ [$i$ is the Number of Iteration(s) and $i_s$ is the Number of shifting of the Initial Point(s).] Give an initial point $(x^{(0)},\lambda_0) \in \mathcal{F}_(m) \times \{1\}.$ Set $\eta_1=10^{-12}, \eta_2=10^{-8}, c_0=50, m_0=25.$ 

$\kappa_1=\sqrt{2}, \kappa_2=9000, $ where the Step-length is determined by $\kappa_1^k, k \in Z$ and the limit of the maximum step-length is maintained by $\kappa_1^k \leq \kappa_2.$

$\epsilon_1=10^{-9}, \epsilon_2=10^{-6}.$ These are real numbers, used as thresholds for $\lambda.$  If $\lambda$ achieves a value $0\leq \lambda \leq \epsilon_1,$ then the algorithm stops with an Acceptable Solution. But, due to a stuck out for some specific reasons, if $\lambda$ achieves a value, such that, $\epsilon_1< \lambda \leq \epsilon_2,$ the algorithm stops, declaring that point as Probable Solution.\\
.
\textbf{Step 1:}  Set $\left[\begin{array}{c} 
x\\
t\\
\end{array}\right]=\left[\begin{array}{c} 
x^{(0)}\\
1\\
\end{array}\right].$ Now calculate the constant $d^{(0)}=\det  (\frac{\partial H}{\partial x}(x^{(0)},\lambda_0)). $ If $|d^{(0)}|\leq \epsilon,$ then stop else go to  step $2$.[$\epsilon \to 0,$ a threshold.]

\textbf{Step 2:} Set $c_1=c_2=0.$ Now calculate the constant $d=\det  (\frac{\partial H}{\partial x}(x,\lambda)). $ If $|d|\leq \epsilon,$ then stop else go to  step $3$.[$\epsilon \to 0,$ a threshold.]

\textbf{Step 3:} Determine the unit predictor direction $\tau^{(n)}$ by the following method:
If sign$(d)=-\text{sign}(d_0),$ then set $t_d=1-\lambda,$ else set $t_d=-\lambda.$ Calculate $w_d=-t_d(\frac{\partial H}{\partial x}(x,\lambda))^{-1}(\frac{\partial H}{\partial \lambda}(x,\lambda)),$ $\tau^{(n)}=$$\left[\begin{array}{c} 
	x_n\\
	t_n\\
\end{array}\right]=\frac{1}{\|x_d,t_d\|}\left[\begin{array}{c} 
x_d\\
t_d\\
\end{array}\right],$ $\tau=\dfrac{|t_d|}{\|x_d,t_d\|},$ where $\|x_d,t_d\|=\sqrt{x_d^2+t_d^2}.$
If $\tau \leq \eta_1,$ then set $c1=c1+1$ else reset $c1=0.$
 If $c1<c0,$ then go to step $4$ else, if $t_d\leq \epsilon_2$ then, stop with a Probable Solution else, stop due to Non-Convergence.

\textbf{Step 4:} Choosing step length: Set $ k=0; \gamma=[\nabla\mu(x)]^tx_n,$ where $\mu: R^n \to R,$   is used to increase step length in the Descent Direction(s). Set
this Function, such that, $\mu(\bar{x})\leq \mu(x),$ and $x, \bar{x} \in \mathcal{F}_(m),$ where $\bar{x}$ is the solution of the problem. $\mu(x)$ is taken as $[H_0(x)]^t[H_0(x)],$ where $H_0(x)=\left[\begin{array}{c} 
(y-w_1+v_1e+J_f^t(z-w_2+v_2e)) \\
W_1z\\
W_2y\\
y-f(z)\\
(m-\overset{n}{\underset{i=1}{\sum}}(z+w_1)_i-v_2)v_1\\
(m-\overset{n}{\underset{i=1}{\sum}}(y+w_2)_i-v_1)v_2\\
\end{array}\right].$ 

If $\gamma \geq 0, $ $x+\kappa_1^{k+1}x_n \in \mathcal{F}_{(m)}, 0<t+\kappa_1^{k+1}t_n<1, $ then set $k=k+1$ and go to step $5.$

else if $\gamma< 0, \mu(x+\kappa_1^{k+1}x_n)<\mu(x+\kappa_1^{k}x_n), x+\kappa_1^{k+1}x_n \in \mathcal{F}_(m),0<t+\kappa_1^{k+1}t_n<1, $ then set $k=k+1,$ and go to step $5.$

else reset $c_2=0,$ and jump to step $6.$

  \textbf{Step 5:} If $\kappa_1^k>\kappa_2,$ then set $k=k-1,$ $c_2=c_2+1$ and go to step $6$, else go to step $4$.
  
  \textbf{Step 6:} If $c_2<c_0,$ then go to step $7$, else if $t_n\leq \epsilon_2,$ then stop with probable solution else stop.
  
 \textbf{Step 7:}  Compute the predictor and corrector point:
 $\left[\begin{array}{c} 
 x_p\\
 t_p\\
 \end{array}\right]= \left[\begin{array}{c} 
 x\\
 t\\
 \end{array}\right]+\kappa_1^k\left[\begin{array}{c} 
 x_n\\
 t_n\\
 \end{array}\right],$  $\left[\begin{array}{c} 
 	\bar{x}_p\\
 	\bar{t}_p\\
 \end{array}\right]= \left[\begin{array}{c} 
 	x_p\\
 	t_p\\
 \end{array}\right]-[J_H(x_p,t_p)^{+}H(x_p,t_p)],$ where $[J_H(x_p,t_p)^{+}]$ is the Moore-Penrose Inverse. Now compute $\left[\begin{array}{c} 
 	\tilde{x}_p\\
 	\tilde{t}_p\\
 \end{array}\right]=$ $ \left[\begin{array}{c} 
 	x_p\\
 	t_p\\
 \end{array}\right]-2[(J_H(x_p,t_p)+J_H(\bar{x}_p,\bar{t}_p))^{+}H(x_p,t_p)]$ and $r=\|H(x_c,t_c)\|.$ Then compute the next iteration $\left[\begin{array}{c} 	x_{cc}\\
 	t_{cc}\\
 \end{array}\right]=\left[\begin{array}{c} 
 	\bar{x}_p\\
 	\bar{t}_p\\
 \end{array}\right]-2[J_H(\bar{x}_p,\bar{t}_p)+J_H(\tilde{x}_p,\tilde{t}_p)]^{+}H(\bar{x}_p,\bar{t}_p).$ Then $\left[\begin{array}{c} 
 	x_b\\
 	t_b\\
 \end{array}\right]=\left[\begin{array}{c} 
 x_{cc}\\
 	t_{cc}\\
 \end{array}\right]-J_H(x_{cc},t_{cc})^{+}H(x_{cc},t_{cc}).$ Repeating $m_0$ times get the next iteration $\left[\begin{array}{c} 
 	x_c\\
 	t_c\\
 \end{array}\right]=\left[\begin{array}{c} 
 	x_b\\
 	t_b\\
 \end{array}\right].$ If $r\leq 1,0<t_c<1,$ and $x_c \in \mathcal{F}_(m),$ then jump to step $10$ else set $k=k-1$ and go to step $8$.
 
  \textbf{Step 8:} Calculate $a = \text{min}(\kappa_1^k,\|x-x_c\|).$ If $a\leq \eta_2,$ then go to step $9$ else jump back to step $5.$
  
   \textbf{Step 9:} If $t_c \leq \epsilon_2,$ then stop with a Probable Solution else, set $ i_s = i_s + 1$ and
   jump back to Step $1$, after changing the Initial Point as, $x^{(0)} = x_c.$
   
    \textbf{Step 10:} Set $\left[\begin{array}{c} 
    	x \\
    	t \\
   \end{array}\right]=$
 $\left[\begin{array}{c} 
   x_c\\
   t_c\\
  \end{array}\right].$ 
    If $t_c\leq \epsilon_1,$ then stop with acceptable homotopy solution else set $i=i+1$ and go to step $2.$\\
     
 Note that $J_H(x,t)^{+}$ is the moore penrose inverse of the jacobian matrix $J_H(x,t).$ That is $J_H(x,t)^{+}=J_H(x,t)^T (J_H(x,t)J_H(x,t)^T)^{-1}.$
The proposed homotopy continuation method solves homotopy function by solving the initial value problem  with the following iterative process 
	    	    		 $J_j=$ $\left[\begin{array}{c} 
 x_p\\
 t_p\\
 \end{array}\right]= \left[\begin{array}{c} 
 x\\
 t\\
 \end{array}\right]+\kappa_1^k\left[\begin{array}{c} 
 x_n\\
 t_n\\
 \end{array}\right],$\\  $T_j=[J_H(x_p,t_p)^{+}H(x_p,t_p)],$\\
 $S_j=J_j-K_j= $ $\left[\begin{array}{c} 
 	\bar{x}_p\\
 	\bar{t}_p\\
 \end{array}\right]$,\\  $TT_j=$ $\left[\begin{array}{c} 
 	x_p\\
 	t_p\\
 \end{array}\right]-2[(J_H(x_p,t_p)+J_H(\bar{x}_p,\bar{t}_p))^{+}H(x_p,t_p)]=\left[\begin{array}{c} 
 	\tilde{x}_p\\
 	\tilde{t}_p\\
 \end{array}\right]$.\\ 
 $SS_j=$ $\left[\begin{array}{c} 	x_{cc}\\
 	t_{cc}\\
 \end{array}\right]=\left[\begin{array}{c} 
 	\bar{x}_p\\
 	\bar{t}_p\\
 \end{array}\right]-2[J_H(\bar{x}_p,\bar{t}_p)+J_H(\tilde{x}_p,\tilde{t}_p)]^{+}H(\bar{x}_p,\bar{t}_p),$\\			By this iterative process the proposed homotopy function achieves the order of convergence as $7^m -1.$
	    		
	    		\begin{theorem}
Suppose that the homotopy function has derivative, which is lipschitz continuous in a convex neighbourhood $\cal N$ of $c$ where $c$ is the solution of the homotopy function $H(u,t)=0,$ whose Jacobian matrix is continuous and nonsingular and bounded on $\cal N.$ 
Then the homotopy continuation method has order $7^{m}-1.$
\end{theorem}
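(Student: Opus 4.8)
The plan is to run a purely local, Newton--Kantorovich-style analysis near the limit point $c$, using the standing hypotheses in exactly the way they enter the classical order proofs for composite Newton-type schemes (cf.\ Lemma~\ref{coorder}). First I would replace $\mathcal{N}$ by a closed ball $B$ around $c$ on which, by hypothesis, $H'$ is Lipschitz with some constant $L$, $J_H$ is nonsingular, and $\|J_H(\cdot)^{-1}\|\le\beta$. On $B$ the nonsingularity makes the Moore--Penrose inverse $J_H^{+}$ appearing in Step~7 behave like an ordinary inverse, so each corrector substep $u\mapsto u-J_H(u)^{+}H(u)$ and each averaged variant is a well-defined smooth map on $B$ with controlled Lipschitz constants. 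A short preliminary lemma, proved by the usual contraction estimate, is then needed: if the predictor $x_p$ lies close enough to $c$, all the intermediate points $\bar x_p,\ \tilde x_p,\ x_{cc},\ x_b$ generated in one pass of Step~7 stay in $B$, so the estimates below are legitimate and the correctors never leave $\mathcal{N}$; this dovetails with the boundedness of $\Gamma_x^{(0)}$ in Theorem~\ref{bdd}.

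Next I would Taylor-expand $H$ about $c$. With $e=u-c$, using $H(c)=0$ and Taylor's theorem with Lipschitz-controlled remainder, one obtains the standard expansions $H(u)=J_H(c)\bigl(e+C_2e^{2}+C_3e^{3}+\cdots\bigr)$ and $J_H(u)=J_H(c)\bigl(I+2C_2e+3C_3e^{2}+\cdots\bigr)$, where $C_k=\frac{1}{k!}J_H(c)^{-1}H^{(k)}(c)$ and every remainder is dominated by an explicit power of $\|e\|$ through $L$ and $\beta$. Substituting these into the four substeps of one pass gives, in order: the pure Newton corrector $\bar x_p$ is locally of order $2$; the arithmetic-mean corrector $\tilde x_p=x_p-2\bigl(J_H(x_p)+J_H(\bar x_p)\bigr)^{-1}H(x_p)$ raises the order (the pair $\bar x_p,\tilde x_p$ is precisely the pair $y^k,z^k$ of Lemma~\ref{coorder}); the second averaged corrector $x_{cc}$, built from $\bar x_p$ and $\tilde x_p$, raises it again by the same kind of computation; and the closing Newton corrector $x_b=x_{cc}-J_H(x_{cc})^{-1}H(x_{cc})$ doubles the exponent once more. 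Retaining only the leading monomial shows that one pass maps $\|x_p-c\|$ to $O\bigl(\|x_p-c\|^{\rho}\bigr)$ for the appropriate $\rho$, with a constant depending only on $L$, $\beta$ and finitely many of the $C_k$.

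The crux is then the bookkeeping for the $m$-fold repetition of the block. Writing $p_j$ for the local order attained after $j$ inner repetitions, one feeds the error of the $j$th output back into the expansions above and again keeps only the dominant term; this produces a recursion for the exponents that telescopes to $p_j=7^{\,j}-1$, so that after $m$ repetitions the corrector --- and hence the homotopy continuation method it drives --- converges with order $7^{m}-1$. I expect this to be the main obstacle: one has to verify that at every level the cross terms coming from the Jacobian and function evaluations at points of different accuracy combine to exactly the stated exponent --- no accidental cancellation that would raise it, no surviving lower-order monomial that would lower it --- and that the accumulated constants stay uniformly bounded over all $m$ repetitions, so that the asymptotic estimate is genuine. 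Once the ball $B$, the stay-in-$B$ lemma and the Taylor identities are in place, the remainder is a long but routine induction.
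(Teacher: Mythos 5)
Your proposal analyzes the corrector block as a composite Newton-type root-finder for $H=0$ near the limit point $c$: Taylor expansions about $c$, per-substep order bookkeeping, and an exponent recursion over the $m$ repetitions. The paper argues in a different frame: it views one pass of the predictor--corrector as a one-step integrator $I_j(u,h)$ for the Davidenko initial value problem $\dot z(h)=-\tilde J^{-1}\tilde f$, $z(0)=u$, whose exact solution is the local parameterization of the homotopy path supplied by the implicit function theorem. Setting $\beta_j=\|z(h)-I_j(u,h)\|$, it invokes Lemma \ref{coorder} to get $\beta_j=O(h^{7^j})$, uses the recursion $\beta_{j+1}\le K\beta_j^{7}$, and concludes by induction. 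The two routes are related (both rest on the same ``error to the seventh power per repetition'' estimate), but they measure different things: yours measures distance to a fixed root $c$ as a function of the previous iterate's error; the paper's measures distance to the exact path point as a function of the step length $h$.

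This difference is where your argument has a concrete gap. In the root-finding frame, composing substeps multiplies orders, so $m$ repetitions of a block of order $7$ yield an iteration of order $7^m$ --- there is no mechanism in your analysis that produces the ``$-1$'' in $7^m-1$. You assert that the exponent recursion ``telescopes to $p_j=7^j-1$,'' but a recursion of the form $p_{j+1}=7\,p_j$ (which is what per-pass seventh-power error contraction gives) telescopes to $7^{\,j}p_0$, not $7^{\,j}-1$. The ``$-1$'' in the paper comes from the ODE convention that a one-step method with local truncation error $O(h^{p+1})$ has order $p$; to recover the stated exponent you would have to recast your estimates in terms of the predictor step length $h$ (i.e., compare against $z(h)$ rather than against $c$), which your proposal never does. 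Separately, both your argument and the paper's lean on a per-block order of $7$ attributed to Lemma \ref{coorder}, while that lemma as stated only asserts fifth-order convergence; that mismatch would have to be resolved in either route.
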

\begin{proof}
By the Implicit Function Theorem ensures the
existence of a unique continuous solution $z(h) \in \cal N$ of $\Dot{z}(h)=-\tilde{J}^{-1}\tilde{f},$  $z(0)=u$ and $h\in(-\delta,\delta),$ for some $\delta >0.$ Define $\beta _j=\|z(h)-I_j(u,h)\|.$ From lemma \ref{coorder} $\beta_j=O(h^{7^j}).$ Then $\beta _{j+1}=\|z(h)-I_{j+1}\| \leq K{\beta _j}^7.$ Hence $\beta _{j+1}=O(h^{7^{j+1}}).$ By induction method the modified homotopy continuation method has convergency of order $7^{m}-1$
\end{proof}

\begin{theorem}
	If the homotopy curve $\Gamma_x^{(0)}$ is smooth, then the positive tangent direction $\tau^{(0)}$ at the initial point $x^{(0)}$ satisfies sign($\det \left[\begin{array}{c}
	\frac{\partial H}{\partial x \partial \lambda}(x^{(0)},1)\\
	\tau^{(0)^t}\\
	\end{array}\right]$)$<0.$ 
\end{theorem}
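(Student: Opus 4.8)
The statement asks us to determine the sign of a certain $(4n+3)\times(4n+3)$ determinant formed by stacking the $(4n+2)\times(4n+3)$ matrix $\dfrac{\partial H}{\partial(x,\lambda)}(x^{(0)},1)$ on top of the row $\tau^{(0)^t}$. The key structural fact to exploit is that the homotopy path $\Gamma_x^{(0)}$ is parameterized by arc length $s$ with $x(0)=x^{(0)}$, $\lambda(0)=1$, and the defining relations
\begin{equation*}
H'_{x^{(0)}}(x(s),\lambda(s))\begin{bmatrix}\dot x\\ \dot\lambda\end{bmatrix}=0,\qquad \left\|(\dot x,\dot\lambda)\right\|=1,\qquad \frac{d\lambda}{ds}(0)<0.
\end{equation*}
Thus $\tau^{(0)}=(\dot x(0),\dot\lambda(0))$ is precisely the (unique up to sign) unit vector spanning the kernel of the $(4n+2)\times(4n+3)$ Jacobian at $(x^{(0)},1)$, and the orientation is pinned down by the requirement $\dot\lambda(0)<0$. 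So the determinant in question is $\det$ of a matrix whose first $4n+2$ rows span the orthogonal complement of $\tau^{(0)}$ and whose last row is $\tau^{(0)^t}$ itself.

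First I would recall the standard linear-algebra lemma: if $A$ is $(k)\times(k+1)$ of full row rank with one-dimensional kernel spanned by a unit vector $\tau$, then $\det\begin{bmatrix}A\\ \tau^t\end{bmatrix}=\pm\|\,\text{(vector of signed }k\times k\text{ minors of }A)\,\|$, and more usefully, by expanding along the last row, $\det\begin{bmatrix}A\\ \tau^t\end{bmatrix}=\sum_{j}(-1)^{k+1+j}\tau_j M_j$ where $M_j$ is the minor of $A$ deleting column $j$; since $\tau_j$ is itself proportional to $(-1)^{j}M_j$ (Cramer's rule for the kernel), this sum has the same sign as the common sign of $(-1)^{j}M_j/\tau_j$, a single constant. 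Concretely, I would isolate the last column (the $\partial/\partial\lambda$ column) and the last coordinate $\tau^{(0)}_{4n+3}=\dot\lambda(0)$. Expanding the big determinant along the last row picks out, among other terms, the term $(-1)^{(4n+2)+1+(4n+3)}\dot\lambda(0)\cdot\det\!\left(\dfrac{\partial H}{\partial x}(x^{(0)},1)\right)$. The point is that when one also uses the kernel relation to re-express every minor $M_j$ in terms of $\det\!\left(\dfrac{\partial H}{\partial x}(x^{(0)},1)\right)$ and the corresponding component of $\tau^{(0)}$, all cross terms collect into a positive multiple of $\dfrac{[\dot\lambda(0)]\,\det(\partial H/\partial x)(x^{(0)},1)}{\lambda\text{-component of }\tau^{(0)}}$ — but since $\dot\lambda(0)$ IS the $\lambda$-component of $\tau^{(0)}$, the determinant equals (positive constant)$\times \det\!\left(\dfrac{\partial H}{\partial x}(x^{(0)},1)\right)$ up to the sign bookkeeping, and the sign is then fixed by combining this with the sign convention $\dot\lambda(0)<0$.

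The remaining ingredient is the sign of $\det\!\left(\dfrac{\partial H}{\partial x}(x^{(0)},1)\right)$ at $\lambda=1$. Setting $\lambda=1$ in the homotopy function $H$, the first four block-rows become $z-z^{(0)}$, $W_1 z - W_1^{(0)}z^{(0)}$, $W_2 y - W_2^{(0)}y^{(0)}$, $y-y^{(0)}$, and the last two rows involve the $v_1,v_2$ terms; differentiating in $x=(z,y,w_1,w_2,v_1,v_2)$ and evaluating at $x^{(0)}$ gives a block-triangular matrix whose diagonal blocks are $I_n$, $W_1^{(0)}=\mathrm{diag}(w_1^{(0)})$, $W_2^{(0)}=\mathrm{diag}(w_2^{(0)})$, $I_n$, and the $2\times2$ block
\begin{equation*}
\begin{bmatrix}-(A^{(0)}-v_2^{(0)}) & -v_1^{(0)}\\[2pt] -v_2^{(0)} & -(B^{(0)}-v_1^{(0)})\end{bmatrix},
\end{equation*}
whose determinant is $(A^{(0)}-v_2^{(0)})(B^{(0)}-v_1^{(0)})-v_1^{(0)}v_2^{(0)}$ — exactly the quantity appearing (and shown nonzero) in the proof of Theorem \ref{reg} up to the factor $\lambda^{4n+2}$. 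Hence $\det\!\left(\dfrac{\partial H}{\partial x}(x^{(0)},1)\right)=\big((A^{(0)}-v_2^{(0)})(B^{(0)}-v_1^{(0)})-v_1^{(0)}v_2^{(0)}\big)\prod_{i}w_{1i}^{(0)}w_{2i}^{(0)}$, which is a fixed nonzero number with a determined sign given the choice of initial point in $\mathcal{F}_{(m)}$. Combining: $\det\begin{bmatrix}\partial H/\partial(x,\lambda)\\ \tau^{(0)^t}\end{bmatrix}$ = (positive constant from the Cramer bookkeeping) $\times\,\dot\lambda(0)\,\times$ (the above determinant of fixed sign), and since $\dot\lambda(0)<0$ one reads off that the whole expression is negative.

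The main obstacle is the sign bookkeeping in the expansion along the last row: one must be careful that the transpositions/cofactor signs $(-1)^{i+j}$ conspire so that every term in $\sum_j (-1)^{(4n+3)+j}\tau^{(0)}_j M_j$ carries the \emph{same} sign, rather than partially cancelling. This is where the kernel identity $M_j = (-1)^{j+j_0}\,\tau^{(0)}_j\,/\,\tau^{(0)}_{j_0}\cdot M_{j_0}$ (Cramer) must be invoked cleanly — it guarantees $(-1)^j M_j$ is a fixed scalar multiple of $\tau^{(0)}_j$, so each summand becomes a positive multiple of $[\tau^{(0)}_j]^2$ times that common scalar, forcing a definite sign. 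After that, everything reduces to the elementary determinant computation at $\lambda=1$ described above, and the hypothesis that $x^{(0)}\in\mathcal{F}_{(m)}$ (so the $2\times 2$ block determinant is nonzero and, by the defining inequalities, of the expected sign) closes the argument.
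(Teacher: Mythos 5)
Your plan takes a genuinely different route to the same place. The paper uses no kernel or cofactor machinery: it writes the (unnormalized) tangent explicitly as $\tau^{(0)}=\begin{bmatrix}(R_1^{(0)})^{-1}R_2^{(0)}\\ -1\end{bmatrix}$ with $R_1^{(0)}=\frac{\partial H}{\partial x}(x^{(0)},1)$ and $R_2^{(0)}=\frac{\partial H}{\partial\lambda}(x^{(0)},1)$, stacks it under the full Jacobian, and performs one block row-reduction (a Schur-complement step) to obtain
\[
\det\begin{bmatrix}R_1^{(0)} & R_2^{(0)}\\ (R_2^{(0)})^{t}(R_1^{(0)})^{-t} & -1\end{bmatrix}
= -\det(R_1^{(0)})\bigl(1+\|(R_1^{(0)})^{-1}R_2^{(0)}\|^{2}\bigr),
\]
so the sign reduces to $-\operatorname{sign}\det(R_1^{(0)})$. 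Your Cramer/last-row-expansion argument is the coordinate-free form of the same fact: with $k=4n+2$ and $\tau^{(0)}=c\cdot(\text{signed minor vector})$, the bordered determinant collapses to $(-1)^{k+1}/c=-1/c$, and the orientation $\dot\lambda(0)<0$ pins $\operatorname{sign}(c)=\operatorname{sign}\det(\partial H/\partial x)(x^{(0)},1)$, giving again $-\operatorname{sign}\det(\partial H/\partial x)(x^{(0)},1)$. Your version is cleaner (no explicit inversion, no need to write down $(R_1^{(0)})^{-1}R_2^{(0)}$) and makes visible that the Schur step is just cofactor expansion in disguise, but both approaches bottom out at the same reduced determinant.

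Two corrections, one shared with the paper. First, a computational slip: after clearing the $z$- and $y$-columns via the identity blocks in the first and fourth block rows, the diagonal blocks sitting in the $w_1$- and $w_2$-columns are $Z^{(0)}=\operatorname{diag}(z^{(0)})$ and $Y^{(0)}=\operatorname{diag}(y^{(0)})$, not $W_1^{(0)}$ and $W_2^{(0)}$ (recall $\partial(W_1 z)/\partial w_1 = Z$), and the scalar factor should therefore be $\prod_i z_i^{(0)}y_i^{(0)}$, matching the product that appears in Theorem~\ref{reg}; the sign conclusion is unaffected since all these diagonals are positive on $\mathcal{F}_{(m)}$. Second, and this is a gap the paper has as well: the hypotheses on $x^{(0)}$ only force the $2\times 2$ determinant $(A^{(0)}-v_2^{(0)})(B^{(0)}-v_1^{(0)})-v_1^{(0)}v_2^{(0)}$ to be nonzero, not positive, so neither your argument nor the paper's actually establishes $\det(\partial H/\partial x)(x^{(0)},1)>0$, which is what the claimed strict inequality requires. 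Your proof is therefore at exactly the paper's level of rigor: it reduces the statement to the positivity of this fixed determinant at $\lambda=1$ and then asserts the sign rather than proving it.
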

\begin{proof}
	From equation \ref{homf} we have
	 $ H(x,x^{(0)},\lambda)=\\$
	 $\left[\begin{array}{c} 
	(1-\lambda)(y-w_1+v_1e+J_f^t(z-w_2+v_2e))+\lambda(z-z^{(0)}) \\
	W_1z-\lambda W_1^{(0)}z^{(0)}\\
	W_2y-\lambda W_2^{(0)}y^{(0)}\\
	y-(1-\lambda)f(z)-\lambda(y^{(0)})\\
	(m-\overset{n}{\underset{i=1}{\sum}}(z+w_1)_i-v_2)v_1- \lambda((m-\overset{n}{\underset{i=1}{\sum}}(z^{(0)}+w_1^{(0)})_i-v_2^{(0)})v_1^{(0)})\\
	(m-\overset{n}{\underset{i=1}{\sum}}(y+w_2)_i-v_1)v_2- \lambda((m-\overset{n}{\underset{i=1}{\sum}}(y^{(0)}+w_2^{(0)})_i-v_1^{(0)})v_2^{(0)})\\
	\end{array}\right]=0.$\\
	 Now at the point $(x=x^{(0)},\lambda=1)$ the  value of the partial derivative is
	$\frac{\partial H}{\partial x \partial \lambda}(x,\lambda)=\begin{bmatrix}
	K_5 & K_6
	\end{bmatrix},$ where\\
	$K_5=\begin{bmatrix} M' & N'\\ \end{bmatrix},$  
	 $M'=\begin{bmatrix} I & 0 & 0 & 0 \\
	W_1^{(0)} & 0 & Z^{(0)} & 0 \\
	0 &  W_2^{(0)} & 0 & Y^{(0)} \\ 
	0 & I & 0 & 0 \\
	-v_1^{(0)} e^t & 0 & -v_1^{(0)} e^t & 0 \\
	0 & -v_2^{(0)} e^t  & 0 & -v_2^{(0)} e^t \\ \end{bmatrix}$\\ and $N'=\begin{bmatrix}0 & 0\\
	0 & 0 \\
	0  & 0 \\
	0 & 0\\
	(m-\overset{n}{\underset{i=1}{\sum}}(z^{(0)}+w_1^{(0)})_i-v_2^{(0)}) &  -v_1^{(0)} \\
	-v_2^{(0)} & (m-\overset{n}{\underset{i=1}{\sum}}(y^{(0)}+w_2^{(0)})_i-v_1^{(0)})\\
	\end{bmatrix}.$\\ $K_6=\begin{bmatrix}
	A\\
	B\\
	C\\
	D\\
	E\\
	F\\
	\end{bmatrix},$$Y^{(0)}=\text{diag}(y^{(0)}), Z^{(0)}=\text{diag}(z^{(0)})$, $W_1^{(0)}=\text{diag}(w_1^{(0)})$, $W_2^{(0)}=\text{diag}(w_2^{(0)}),\\A=-[y^{(0)}-w_1 ^{(0)}+v_1 ^{(0)}e+J_{(f^{(0)})}^t(z^{(0)}-w_2 ^{(0)}+v_2 ^{(0)}e)],\\ B=-W_1^{(0)}z^{(0)}, C=-W_2^{(0)}y^{(0)}, D= f(z^{(0)})-y^{(0)}, E=-(m-\overset{n}{\underset{i=1}{\sum}}(z^{(0)}+w_1^{(0)})_i-v_2^{(0)})v_1^{(0)}, F=-(m-\overset{n}{\underset{i=1}{\sum}}(y^{(0)}+w_2^{(0)})_i-v_1^{(0)})v_2^{(0)}. $\\
	Let positive tangent direction be $\tau^{(0)}=\left[\begin{array}{c}
	t \\ -1
	\end{array}\right]=\left[\begin{array}{c}
	(R^{(0)}_1)^{(-1)}R_2^{(0)} \\ -1
	\end{array}\right],$\\ where $R^{(0)}_1=\left[\begin{array}{cc} 
	P' & Q'\\
	\end{array}\right],$
$P'=\left[\begin{array}{cccc} 
I & 0 & 0 & 0 \\
W_1^{(0)} & 0 & Z^{(0)} & 0 \\
0 &  W_2^{(0)} & 0 & Y^{(0)} \\ 
0 & I & 0 & 0 \\
-v_1^{(0)} e^t & 0 & -v_1^{(0)} e^t & 0  \\
0 & -v_2^{(0)} e^t  & 0 & -v_2^{(0)} e^t \\
	\end{array}\right],$\\ $Q'=\left[\begin{array}{cc}
	 0 & 0\\
	  0 & 0\\
	  0  & 0\\
	  (m-\overset{n}{\underset{i=1}{\sum}}(z^{(0)}+w_1^{(0)})_i-v_2^{(0)}) &  -v_1^{(0)}\\
	   -v_2^{(0)} & (m-\overset{n}{\underset{i=1}{\sum}}(y^{(0)}+w_2^{(0)})_i-v_1^{(0)})\\ \end{array}\right]$\\
	  and $R^{(0)}_2=\left[\begin{array}{c} 
	A\\
	B\\
	C\\
	D\\
	E\\
	F\\	\end{array}\right],$ where $A=-[y^{(0)}-w_1 ^{(0)}+v_1 ^{(0)}e+J_{(f^{(0)})}^t(z^{(0)}-w_2 ^{(0)}+v_2 ^{(0)}e)],\\ B=-W_1^{(0)}z^{(0)}, C=-W_2^{(0)}y^{(0)}, D= f(z^{(0)})-y^{(0)},\\ E=-(m-\overset{n}{\underset{i=1}{\sum}}(z^{(0)}+w_1^{(0)})_i-v_2^{(0)})v_1^{(0)}, F=-(m-\overset{n}{\underset{i=1}{\sum}}(y^{(0)}+w_2^{(0)})_i-v_1^{(0)})v_2^{(0)}. $ \\
	Here $\text{det}(R^{(0)}_1)=$\\$(\frac{(m-\overset{n}{\underset{i=1}{\sum}}(z^{(0)}+w_1^{(0)})_i-v_2^{(0)})(m-\overset{n}{\underset{i=1}{\sum}}(y^{(0)}+w_2^{(0)})_i-v_1^{(0)})}{v_1^{(0)}}-v_2^{(0)})(m-\overset{n}{\underset{i=1}{\sum}}(y^{(0)}+w_2^{(0)})_i-\\v_1^{(0)})\prod_{i=1}^{n} z_i^{(0)}y_i^{(0)}\neq 0.$  Therefore, \\
	$\det\left[\begin{array}{c}
	\frac{\partial H}{\partial x \partial \lambda}(y^{(0)},1)\\
	\tau ^{(0)^t}\\
	\end{array}\right]$$=\det\left[\begin{array}{cc}
	R^{(0)}_1 & R^{(0)}_2\\
	(R^{(0)}_2)^t(R^{(0)}_1)^{(-t)} & -1\\	
	\end{array}\right]$ \\ $= \det\left[\begin{array}{cc}
	R^{(0)}_1 & R^{(0)}_2\\
	0 & -1-(R^{(0)}_2)^t(R^{(0)}_1)^{(-t)}(R^{(0)}_1)^{(-1)}R_2^{(0)} \\	
	\end{array}\right] \\ =\det(R^{(0)}_1)\det(-1-(R^{(0)}_2)^t(R^{(0)}_1)^{(-t)}(R^{(0)}_1)^{(-1)}R_2^{(0)}) \\ =-\det(R^{(0)}_1)\det(1+(R^{(0)}_2)^t(R^{(0)}_1)^{(-t)}(R^{(0)}_1)^{(-1)}R_2^{(0)}) \\ =-(\frac{(m-\overset{n}{\underset{i=1}{\sum}}(z^{(0)}+w_1^{(0)})_i-v_2^{(0)})(m-\overset{n}{\underset{i=1}{\sum}}(y^{(0)}+w_2^{(0)})_i-v_1^{(0)})}{v_1^{(0)}}-v_2^{(0)})(m-\overset{n}{\underset{i=1}{\sum}}(y^{(0)}+w_2^{(0)})_i-\\v_1^{(0)})\prod_{i=1}^{n} z_i^{(0)}y_i^{(0)}\det(1+(R^{(0)}_2)^t(R^{(0)}_1)^{(-t)}(R^{(0)}_1)^{(-1)}R_2^{(0)})<0. $ 
\end{proof}

\newpage

\section{Numerical Example}\label{oliex}
 We consider the nonlinear complementarity form of the oligopolistic market equilibrium problem and determine the equilibrium point with homotopy method.
  
Here we consider the oligopolistic market equilibrium problem operating under the Nash equilibrium concept of noncooperative behaviour, as a
 problem of the game theory.
 
To illustrate the use of the above three algorithms presented in the previous section, a numerical example\cite{articleHarker} is presented here. Consider an oligopoly with five firms, each with a total cost function of the form:
\begin{equation}\label{olix}
c_i(Q_i)=n_iQ_i+\frac{\beta_i}{\beta_i+1}{L_i}^{-\frac{1}{\beta_i}}{Q_i}^{\frac{\beta_i+1}{\beta_i}}
\end{equation}
The demand curve is given by:
\begin{equation}
\tilde{Q}=5000P^{-1.1}, \ \   P(\tilde{Q})=5000^{1/1.1}\tilde{Q}^{-1/1.1}.
\end{equation}
The parameters of the equation \ref{olix} for the five firms is given below: 
\begin{table}[ht]
	\caption{Value of parameters for five firms} 
	\centering
	\begin{tabular}{c c c c} 
		\hline\hline
		firm $i$ & $n_i$ & $L_i$ & $\beta_i$ \\ [0.5ex] 
		\hline
		1 & 10 & 5 & 1.2 \\ 
		2 & 8 & 5 & 1.1 \\
		3 & 6 & 5 & 1 \\
		4 & 4 & 5 & 0.8 \\
		5 & 2 & 5 & 0.6 \\ [1ex] 
		\hline
	\end{tabular}
\end{table}\\
\\
\\
\\

 Now we solve this problem using the above algorithm.

 	  	 To solve this problem using the homotopy method \ref{homf} with real valued parameter $\lambda,$ we first take the initial point  $z^{(0)}=$$\left[\begin{array}{c} 
 	1\\
 	1\\
 	1\\
 	1\\
 	1\\
 	\end{array}\right],$ $y^{(0)}=$$\left[\begin{array}{c} 
 	1\\
 	1\\
 	1\\
 	1\\
 	1\\
 	\end{array}\right],$ ${w_1}^{(0)}=$$\left[\begin{array}{c} 
 	1\\
 	1\\
 	1\\
 	1\\
 	1\\
 	\end{array}\right],$ ${w_2}^{(0)}=$$\left[\begin{array}{c} 
 	1\\
 	1\\
 	1\\
 	1\\
 	1\\
 	\end{array}\right],$ $v_1=0.001,$ $v_2=0.001,$ $\lambda_0=1.$ After $23$ iterations we get the result $\bar{z}=$$\left[\begin{array}{c} 
 	15.42931\\
 	12.49858\\
 	9.663473\\
 	7.165094\\
 	5.132566\\
 	\end{array}\right],$ $\bar{y}=$$\left[\begin{array}{c} 
 	0\\
 	0\\
 	0\\
 	0\\
 	0\\
 	\end{array}\right],$ $\bar{w_1}=$$\left[\begin{array}{c} 
 	0\\
 	0\\
 	0\\
 	0\\
 	0\\
 	\end{array}\right],$ $\bar{w_2}=$$\left[\begin{array}{c} 
 	15.42931\\
 	12.49858\\
 	9.663473\\
 	7.165094\\
 	5.132566\\
 	\end{array}\right],$ $\bar{v_1}=0,$ $\bar{v_2}=0,$ $\bar{\lambda}=0.$

 \begin{figure}[h]
		\centering
		\includegraphics[height=1.5in, width=4in]{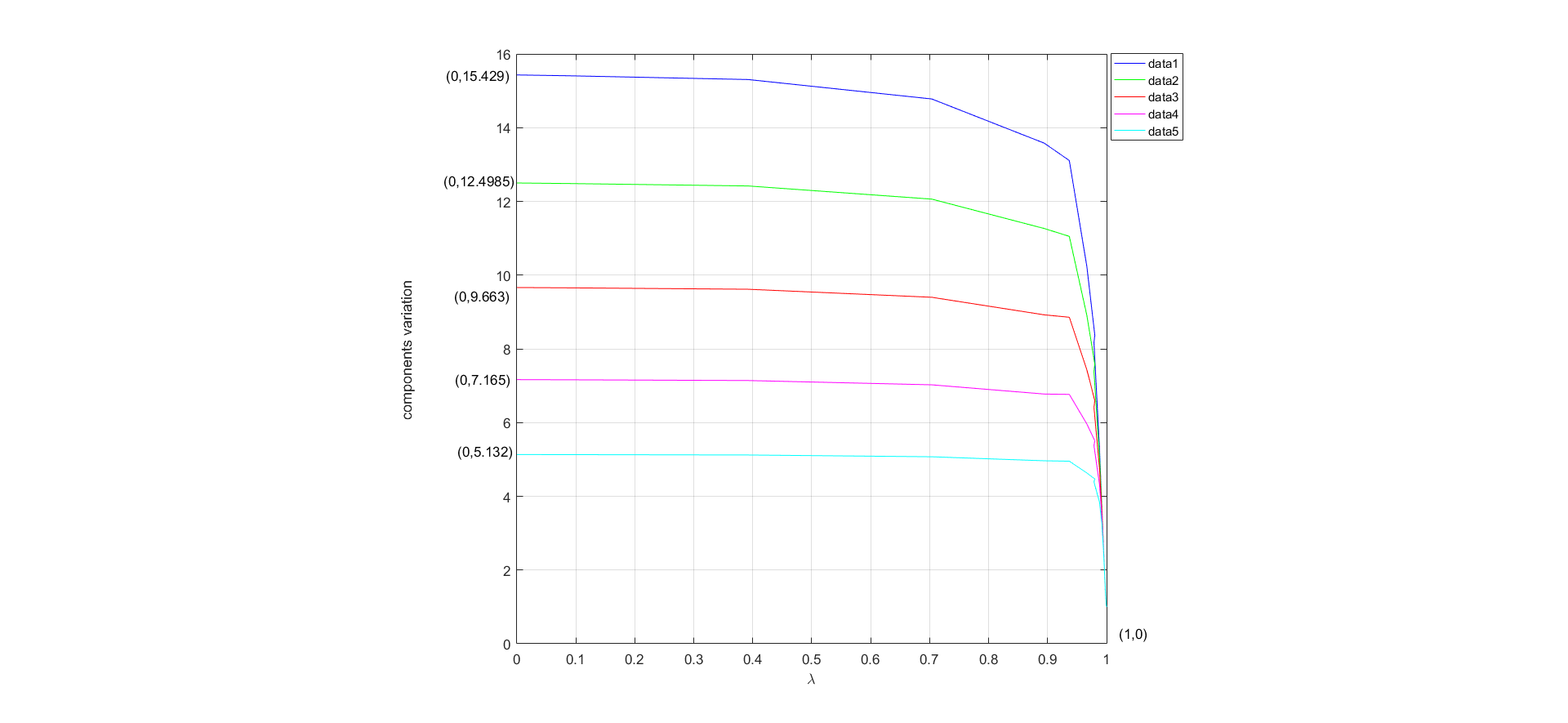}  
		\caption{Solution Approach of Oligopoly Problem}
		\label{fig0}
	\end{figure}

\bigskip

	\newpage
  \section{Conclusion}
In this study, we consider homotopy path to solve nonlinear complementarity problem based on our newly introduced homotopy function by modified homotopy continuation method.  We find the positive tangent direction of the homotopy path. We prove that the smooth curve for the proposed homotopy function is bounded and convergent under some conditions related to initial points. An oligopoly equilibrium problem is numerically solved by the proposed modified homotopy continuation method. .
\section{Acknowledgment}
The author A. Dutta is thankful to the Department of Science and Technology, Govt. of India, INSPIRE Fellowship Scheme for financial support. 
\vsp

\bibliographystyle{plain}
\bibliography{ref1}

\end{document}